\documentclass[11pt,a4paper]{article}

\usepackage[margin=1in]{geometry}
\usepackage{amsmath, amssymb, amsthm}
\usepackage{mathrsfs}
\usepackage{graphicx}
\usepackage{hyperref}
\usepackage{xcolor}
\usepackage{fancyhdr}
\usepackage{mathtools}
\usepackage{amssymb,bbm}
\usepackage{amsthm, mathtools, mathrsfs, calligra, unicode-math}
\usepackage{blkarray, bigstrut}
\usepackage{graphicx}
\usepackage{tikz} 
\usepackage{program}
\usepackage{stmaryrd}
\usepackage{hyperref}
\definecolor{titleblue}{RGB}{0,70,140}

\hypersetup{
	colorlinks=true,
	linkcolor=blue,
	citecolor=blue,
	urlcolor=blue
}

\newtheorem{theorem}{Theorem}[section]
\newtheorem{lemma}[theorem]{Lemma}
\newtheorem{proposition}[theorem]{Proposition}
\newtheorem{corollary}[theorem]{Corollary}

\theoremstyle{definition}
\newtheorem{definition}[theorem]{Definition}
\newtheorem{example}[theorem]{Example}

\theoremstyle{remark}
\newtheorem{remark}[theorem]{Remark}

\title{\textbf{\textcolor{titleblue}{Characterization of Normalizer of Lie Superalgebra and its Application to Control Theory}}}

\author{
	Aroonima Sahoo, Kishor Chandra Pati \thanks{Department of Mathematics, National Institute of Technology Rourkela, India-769008}, 
	Tofan Kumar Khuntia\thanks{Department of Mathematics, Harish Chandra Research Institute, Prayagraj, Uttar Pradesh, India-211019}
}
\date{}

\begin{document}
	
	\maketitle
	
	\begin{abstract}
		The dynamical systems having both bosonic and fermionic variables play an important role in the theory of supersymmetry. This article addresses the control problems including both bosonic and fermionic variables on Lie supergroup as the configuration space. Here, the control systems are characterized using the normalizer of Lie subsuperalgebra of left-invariant vector fields in the Lie superalgebra of all smooth vector fields of Lie supergroup. Then, the linear control system is studied in detail and its controllability criterion is proposed along with suitable examples.
	\end{abstract}
	
	\noindent \textbf{Keywords:} Control system, Controllability, Lie supergroup, Lie superalgebras, Vector fields \\
	
	\noindent \textbf{MSC (2020):} 37N35, 58A50, 58C50, 93B05,  93C05
	
	\tableofcontents
	
	\section{Introduction}
	
	 \par Recent times have witnessed significant advancement in the study of dynamical systems that incorporate both fermions and bosons. These kind of systems gives rise to new symmetries, which demand different types (graded) of algebraic structures in order to study them. The ordinary dynamical systems only contain the commuting variables, whereas the ones involving both bosonic and fermionic variables require both commuting (to represent bosons) as well as anticommuting (to represent fermions). The behaviour of such dynamical systems can not be interpreted with the help of ordinary manifold theory. Thus, it needs a more appropriate model space having $\mathbb{Z}_2$-graded geometry. Thus, the supermanifold theory comes into play. For example, one can consider the case of KdV equations \cite{das1990zero}; 

	the ordinary KdV equation is derived from the zero-curvature condition associated with the Lie algebra $\mathfrak{sl}(2, \mathbb{R})$ whereas the super Kdv equation is related with zero-curvature condition associated with the Lie superalgebra $\mathfrak{osp}(2 \vert 1)$. The available literature substantiates the importance of control problems related to Kdv equation in ordinary space. So, this inspires us to pursue same studies for the super case. With this motivation, we deal with the control system on Lie supergroup and try to find the controllability condition for the linear case.\\

	\par In 1951, Schwinger introduced the anticommuting numbers  \cite{schwinger1951theory} for the first time and in 1966, Berezin \cite{berazin2012method, berezin1970lie} developed some mathematics using these anticommuting elements. His theories took shape with the development of supersymmetry theories (Wess and Zumino, 1974 \cite{ferrara1974supergauge}; Arnowitt, 1975 \cite{arnowitt1975superfield}). Firstly, the Schr{\"o}dinger equations were extended to superspace  \cite{delbourgo1989anharmonic, de2008schrodinger}. Later, many other problems were formulated in the graded space such as quantum Kepler problem \cite{zhang2008orthosymplectic}, the quantum harmonic oscillator \cite{macfarlane1991quantum}, quantum (an-)harmonic oscillator\cite{de2007hermite, dunne1988negative}, super KdV equation \cite{das1990zero}, Witten's dynamics \cite{volkov1986hamiltonian, soroka1989hamilton} to name some. Thus, the supermanifold theory is the most suitable space to study these dynamical systems and this space can be viewed as an extension of classical theory which also involves anticommuting co-ordinates. Thus, this field is very useful in the studies of supersymmetry  (\cite{buchbinder1998ideas, deligne1999quantum, varadarajan2004supersymmetry, gates2001superspace}), supergravity ( \cite{freedman1976properties}), superfield theory ( \cite{strathdee1987super, salam1994selected}) and many more. There has been several approaches made for supermanifold theory such as concrete approach by Rogers \cite{rogers2007supermanifolds}, De Witt \cite{dewitt1992supermanifolds}; sheaf theorhetic approach by Leites \cite{leites1980introduction}, Kostant \cite{kostant1977graded} to name some. \\
	
	
	\par The dynamics of a control system $\Sigma$ on a Lie supergroup $\mathcal{G}$ can be given by the vector field on $\mathcal{G}$ parameterized by some control vector $u$. At the outset, the control systems were studied on ordinary Euclidean space $\mathbb{R}^n$ until 1972, when Brockett \cite{brockett1972system} discovered the existence of control systems on Lie group. Subsequently, Jurdjevic and Sussmann  \cite{jurdjevic1972control, sussmann1972controllability} produced many striking results that reshaped the study of non-linear control systems and this led to the development of geometric control theory by  Sachkov \cite{sachkov2009control, sachkov2022introduction}, Agrachev \cite{agrachev2013control} and many others. Meanwhile, Markus \cite{markus2006controllability} studied a particular case of control theory which is the linear control system on matrix Lie groups in 1980, and both Ayala and Tirao \cite{ayala1999linear} generalized this in 1999. The aim of this work is to extend the notion of control system from Lie groups to the case of Lie supergroup by considering a generalized control system characterized by the normalizer. Furthermore, the linear control system on Lie supergroup is disscused in details establishing its controllability condition.\\

	\par  Here, the control system on Lie supergroup $\mathcal{G}$ is proposed using the normalizer of the corresponding Lie superalgebra $\mathfrak{g}$ in the Lie superalgebra of smooth vector fields of $\mathcal{G}$. This characterization helps to generalize the affine control system on a Lie supergroup. the set of smooth vector fields $Vec(\mathcal{G})$ of a Lie supergroup $\mathcal{G}$ forms a Lie superalgbera and the set of left-invariant vector fields $\mathfrak{g}$ forms a Lie subsuperalgebra which is isomorphic to the corresponding Lie superalgbera $\mathfrak{g}$ of the Lie supergroup $\mathcal{G}$ identified as the tangent space $T_e(\mathcal{G})$ at identity $e$ of $\mathcal{G}$. The normalizer $\textnormal{norm}_{Vec(\mathcal{G})} (\mathfrak{g})$ of Lie subsuperalgebra $\mathfrak{g}$ in the Lie superalgebra $Vec(\mathcal{G})$ can be identified with $\mathfrak{g} \otimes \textnormal{Sder}(\mathfrak{g})$, which gives rise to the result that a vector field in the normalizer can be expressed as the comboination of a left-invariant vector field and a linear vector field because a linear super derivation of a Lie superalgebra gives rise to a linear vector field on the corresponding Lie supergroup. \\
	
	\par This article is organized as follows. Section \ref{sec2} contains the necessary notations and existing concepts to understand the foundation of our study. Here, we give an insight into supermanifold, Lie supergroup and Lie superalgebra along with the required theorems. Section \ref{sec3} briefly explains the motivation behind taking the notion of normalizer as it widely covers many types of control systems. Section \ref{sec4} provides detailed analysis of linear control systems on Lie supergroup and authenticates the generalization of linear system from superspace to Lie supergroup. Section \ref{sec5} gives some conclusive remark.
	
	\section{Preliminaries} \label{sec2}

	In this section, the concepts of supermanifold, Lie supergroups and Lie superalgebra and other related terms and definitions are discussed. Most of the definitions are followed from Roger \cite{rogers2007supermanifolds} and DeWitt \cite{dewitt1992supermanifolds}. There are also other approaches available in the literature using sheaf theory such as Varadrajan \cite{varadarajan2004supersymmetry}, Carmeli et. al. \cite{carmeli2011mathematical} and many others.

	The term super reflects the $\mathbb{Z}_2$ gradation of the space. The basic difference from the ordinary space is that the underlying field is replaced by a supercommutative algebra known as Grassmann algebra $\mathbb{K}_{S[L]}$, which is an associative algebra over the field $\mathbb{K}$ with unit having generators $ \xi_1,\cdots,\xi_L $ satisfying anticommuting property i.e.,
	$$\xi_i \xi_j=-\xi_j\xi_i \textnormal{ or equivalently } \xi_i^2=0,~~i,j=1,\cdots,L.$$

	The $(m \vert n)$-dimensional superspace over the Grassmann algebra $\mathbb{R}_{S[L]}$ is given by
	$$\mathbb{R}_{S[L]}^{m\lvert n}=\underbrace{\mathbb{R}_{S[L,0]} \times \cdots \times \mathbb{R}_{S[L,0]}}_\text{m copies} \times
	\underbrace{\mathbb{R}_{S[L,1]} \cdots \times \mathbb{R}_{S[L,1]}}_\text{n copies}.$$
	
	The superspace $\mathbb{R}_{S[L]}^{m\lvert n}$ acts as the counterpart in super analysis as  $\mathbb{R}^n$ in classical analysis and its elements are of the form
	\begin{align*}
		&X=(x;\theta)= (x_1, \cdots, x_n; \theta_1, \cdots, \theta_m),\\
		\text{where } &x_i \in \mathbb{R}_{S[L,0]}, ~i=1, 2, \cdots, m \text{ and } \theta_j \in \mathbb{R}_{S[L,1]}, ~j=1,2,\cdots, n.
	\end{align*}

	For the case of infinite number of generators, the subscript `L' is omitted. Next we discuss an important map called the body map which is a unique algebra homomorphism defined by 
	\begin{align*}
		\epsilon : \mathbb{R}_{S} &\rightarrow \mathbb{R}\\
		\sum_{\lambda \in M_L} x_\lambda \xi_\lambda &\mapsto x_{\emptyset}=x_{\scriptscriptstyle B}.
	\end{align*}
	
	\noindent An extension of the body map that projects the superspace to ordinary Euclidean space is 
	\begin{align*}
		\epsilon_{m,n} : \mathbb{R}_S^{m\vert n} &\rightarrow \mathbb{R}^m \\ (x_1,\cdots , x_{m};\theta_1,\cdots , \theta_n) & \mapsto (\epsilon(x_1),\cdots ,\epsilon(x_m)) =(x_{1_{\scriptscriptstyle B}}, \cdots , x_{m_{\scriptscriptstyle B}}).
	\end{align*}
	
	Here, we follow the DeWitt topology, according to which a subset $\mathcal{V}$ is open in $\mathbb{R}_S^{m\vert n}$ if and only if $\epsilon_{m,n}(V)$ is open in $\mathbb{R}^m$. With the domain defined, one can move forward to the supermanifold concept. A $(m\vert n)$-dimensional supermanifold $\mathcal{M}$ can be defined as a topological space which is locally super Euclidean i.e., every point has a neighborhood that is homeomorphic to an open subset of  superspace $\mathbb{R}_{S}^{m\vert n}.$

	A Lie supergroup $\mathcal{G}$ is a smooth (in $H^\infty$ sense)  supermanifold which has a group structure. Here the transition maps taken are $H^\infty$ instead of $G^\infty$ because the former one helps to show the one-one correspondence between the concrete approach and the sheaf-theoretic approach \cite{rogers2007supermanifolds}. Here we mainly consider the matrix Lie supergroup and some important examples are discussed in section \ref{sec4.3}. For more details on the Lie supergroup, one can refer to Rittenberg \cite{rittenberg2005guide, rittenberg1978elementary} and Berezin \cite{berezin1981group}.

	Analogous to Lie algebra we have the tangent space at identity $T_e(\mathcal{G})$ of a Lie supergroup is isomorphic to the algebra of left-invariant vector fields $\mathcal{L(G)}$, which we denote as Lie superalgebra $\mathfrak{g}$. This is a super vector space with a bilinear map $\llbracket \cdot , \cdot \rrbracket: \mathfrak{g} \times \mathfrak{g} \rightarrow \mathfrak{g}$ satisfying  graded skew-symmetry and graded jacobi-identity. For more details on Lie superalgebra, one can refer to Kac  \cite{kac1977lie}, Frappat et. al. \cite{frappat2000dictionary} and Musson \cite{musson2012lie}.

	\section{Characterization of Normalizer}\label{sec3}
	
	Here, in order to define the dynamics of a control system, the concept of normalizer is used. The basic idea behind it is that a vector field $X$ on $\mathbb{R}^{m\vert n}$ is linear if it is of the form $X_p=A(p)+q$ where $A:\mathbb{R}^{m\vert n} \to \mathbb{R}^{m\vert n} $ is a graded-preserving linear map and $q$ is a fixed supervector of $\mathbb{R}^{m\vert n}$. It can be observed that for any constant super vector field $Y$, the superbracket $\llbracket X,Y \rrbracket$ is a constant vector field. So, the set of all linear super vector fields on $\mathbb{R}^{m\vert n}$ can be thought as the set of normalizer of Lie superalgebra of $\mathbb{R}^{m\vert n}$ in the Lie superalgebra of all $H^\infty$ super vector fields on $\mathbb{R}^{m\vert n}$. So, one may generalize the linear systems of superspace to Lie supergroup using the concepts of normalizer.\\

	Let the set of all $H^\infty$ vector fields on the Lie supergroup $\mathcal{G}$ is denoted as $Vec(\mathcal{G})$ and the set of left-invariant vector fields is denoted as $\mathfrak{g}$. Then the normalizer of Lie subsuperalgebra $\mathfrak{g}$ in the Lie superalgebra $Vec(\mathcal{G})$ is given by
	$$ \textnormal{norm}_{Vec(\mathcal{G})}(\mathfrak{g}) :=\Big\{ X \in Vec(\mathcal{G}) ~ \Big\vert ~  \llbracket X,Y \rrbracket \in \mathfrak{g},
	\textnormal{ for all } Y \in \mathfrak{g}  \Big\}. $$
	
	It is easy to see that $ \textnormal{norm}_{Vec(\mathcal{G})} (\mathfrak{g})$ is a Lie subsuperalgebra of $ Vec(\mathcal{G})$ and the  adjoint map $\textnormal{ad}: \textnormal{norm}_{Vec(\mathcal{G})} (\mathfrak{g}) \rightarrow \textnormal{Sder}(\mathfrak{g})$ defined as $X \mapsto \textnormal{ad}(X)_{\mathfrak{g}}$ is a Lie superalgebra homomorphism, $\textnormal{Sder}(\mathfrak{g})$ is the set of all superderivations of $\mathfrak{g}$. Moreover, the kernel of this adjoint map is the centralizer $\mathfrak{z}(\mathfrak{g})$ of $\mathfrak{g}$ in $ Vec(\mathcal{G})$.
	
	\begin{lemma}
	For a connected Lie supergroup $\mathcal{G}$ the centralizer $\mathfrak{z}(\mathfrak{g})$ consists of all the right-invariant vector fields on $\mathcal{G}$.
	\end{lemma}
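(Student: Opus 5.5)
We prove two inclusions: every right-invariant vector field supercommutes with every left-invariant one, and conversely every $X \in Vec(\mathcal{G})$ supercommuting with all of $\mathfrak{g}$ is right-invariant. The tools are the flows of left-invariant fields, which are right translations, and the fact that a connected Lie supergroup is generated by a neighbourhood of the identity. In the concrete (Rogers/DeWitt) setting, a left-invariant vector field $Y^L$ with $Y^L_e = Y \in T_e(\mathcal{G})$ is characterized by $Y^L_g = (dL_g)_e Y$. A right-invariant one is characterized by $Y^R_g = (dR_g)_e Y$. For even $Y$ we use the one-parameter subgroup $t \mapsto \exp(tY)$ with $t \in \mathbb{R}_{S[L,0]}$; for odd $Y$ we use the supergroup "flow" $\tau \mapsto \exp(\tau Y)$ with odd parameter $\tau \in \mathbb{R}_{S[L,1]}$. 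In both cases the flow of $Y^L$ is $g \mapsto g\exp(tY)$, i.e.\ right translation $R_{\exp(tY)}$.

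\textbf{Right-invariant fields lie in $\mathfrak{z}(\mathfrak{g})$.} Let $Z$ be right-invariant, so $(R_h)_* Z = Z$ for all $h$. For homogeneous $Y$, the superbracket $\llbracket Y^L, Z\rrbracket$ is the Lie derivative of $Z$ along $Y^L$, that is, the derivative at parameter $0$ of $(R_{\exp(tY)})_* Z$. This expression is constant in $t$, so the bracket vanishes. Here "derivative" means the $H^\infty$ Grassmann derivative in $t$ (respectively in $\tau$ for odd $Y$). The odd case needs a short separate check: for odd $\tau$, $\exp(\tau Y) = e + \tau Y$, so the flow is first order and the Lie derivative is just the $\tau$-coefficient. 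An equivalent route is to verify in a matrix Lie supergroup that $Y^L_g = gY$ and $Z^R_g = Zg$. A direct computation then shows that the bracket equals $gY \cdot$ stuff minus stuff, and it cancels by associativity. I would present this matrix computation as a sanity check.

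\textbf{Fields in $\mathfrak{z}(\mathfrak{g})$ are right-invariant.} Let $X$ satisfy $\llbracket X, Y^L\rrbracket = 0$ for all $Y$. Decompose $X$ into homogeneous parts; each part is again in the centralizer, since $\mathfrak{g}$ is graded. Vanishing of the bracket with $Y^L$ implies that $X$ is invariant under the flow of $Y^L$, so $(R_{\exp(tY)})_* X = X$ for all even $t$ and odd $\tau$ in the flow domain. Hence $X$ is invariant under $R_h$ for every $h$ in the subgroup generated by the exponentials. Connectedness of $\mathcal{G}$ (via connectedness of the body Lie group and the DeWitt topology, in which the soul directions are automatically reached by exponentials) gives that this subgroup is all of $\mathcal{G}$. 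Therefore $X$ is right-invariant.

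\textbf{Main obstacle.} The delicate step is this converse: passing from infinitesimal invariance to invariance under the flow. Classically one differentiates $s \mapsto (R_{\exp(sY)})_* X$ and obtains $(R_{\exp(sY)})_*\llbracket Y^L, X\rrbracket = 0$, then integrates. In the super case this needs (i) the existence and uniqueness of flows for even $H^\infty$ vector fields with $\mathbb{R}_{S[L,0]}$-parameter, and (ii) for odd $Y$, care with the nilpotent odd parameter; there the invariance is simply the first-order statement already given by the bracket. I would handle (i) by restricting to the body parameter $t \in \mathbb{R}$ and extending by $H^\infty$ (Grassmann-analytic) continuation. Finally, I would argue generation as follows: $\mathcal{G}$ connected means its body is connected; the body is generated by even exponentials; and the fibres of $\epsilon$ over each body point are reached by multiplying by exponentials of nilpotent soul elements.
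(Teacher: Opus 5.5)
Your proposal is correct and uses the same mechanism as the paper: flows of left-invariant fields are right translations, a vanishing bracket is equivalent to invariance under these flows, and connectedness of $\mathcal{G}$ extends that invariance from products of exponentials to all right translations. Your formulation differs in a useful way: you state the condition as invariance of $X$ under pushforward by $R_{\exp(tY)}$ rather than as commutation of two flows, so you never need a flow of the centralizer element itself (the paper's $\psi_s^Y$, which is unavailable when that element is odd), and you treat odd left-invariant directions explicitly via odd-parameter translations $R_{\exp(\tau Y)}$, a case the paper's argument with $s,t\in\mathbb{R}^{1\vert 0}$ leaves out.
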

	
	\begin{proof}
	Two vector fields commute if and only if their local flows commute. For $X\in \mathfrak{g}$ a left-invariant vector field its flow $ \textnormal{exp}(tX)$ satisfies $p\mapsto p\textnormal{exp}(tX)$ for all $p\in \mathcal{G}$. Thus, for all $X\in \mathfrak{g},p\in \mathcal{G},s,t \in \mathbb{R}^{1\vert 0}$ it is obtained that
	$$ Y\in \mathfrak{z}(\mathfrak{g}) \iff \psi_s^Y (p~ \textnormal{exp}(tX)) =\psi_s^Y(p)~\textnormal{exp}(tX). $$
	This gives $ \psi_s^Y (p q)=\psi_s^Y (p)q,$ for all $q\in \mathcal{G}$, which is the property of right-invariant vector field. Conversely, if $Y$ is right-invariant, then
	$$ \dfrac{d}{ds} \biggr\rvert \psi_s^Y (p q)=\dfrac{d}{ds} \biggr\vert \psi_s^Y (p) q=Y_{\psi_s^Y(p)q},$$
	and $\psi_0^Y (p)=\psi_0^Y(pq)$. Thus, $\psi_s^Y (p)$ and $\psi_s^Y(pq)$ are solution of same ODE with initial condition $t=0$ and by the uniqueness of solution, they must coincide.
	\end{proof}
	
	It is easy to check that $\mathscr{H}=\{ X \in \textnormal{norm}_{Vec(\mathcal{G})}(\mathfrak{g}) \vert X_e=0 \}$ is a Lie subsuperalgebra. Since the centralizer $\mathfrak{z}(\mathfrak{g})$ is a Lie superalgebra ideal in $\textnormal{norm}_{Vec(\mathcal{G})}(\mathfrak{g})$, it is obtained that $\llbracket\mathfrak{z}(\mathfrak{g}),\mathscr{H}\rrbracket=\mathfrak{z}(\mathfrak{g})$ and since the only right-invariant vector field which vanishes at a point is zero vector field, it is obtained that $\mathscr{H} \cap \mathfrak{z}(\mathfrak{g}) =\{0\}$. Therefore, the direct sum $\mathfrak{z}(\mathfrak{g}) \oplus \mathscr{H}$ is well-defined.
	
	\begin{theorem}
	If $\mathcal{G}$ is a connected Lie supergroup, then  $ \mathfrak{z}(\mathfrak{g}) \oplus \mathscr{H}$ and  $\textnormal{norm}_{Vec(\mathcal{G})}(\mathfrak{g})$ are isomorphic.
	\end{theorem}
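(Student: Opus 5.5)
The plan is to show that the natural map $\Phi:\mathfrak{z}(\mathfrak{g}) \oplus \mathscr{H} \to \textnormal{norm}_{Vec(\mathcal{G})}(\mathfrak{g})$, $(Z,H)\mapsto Z+H$, is a bijective linear map of super vector spaces that preserves parity. Both summands are subspaces of $\textnormal{norm}_{Vec(\mathcal{G})}(\mathfrak{g})$, since $\mathfrak{z}(\mathfrak{g})$ is the kernel of $\textnormal{ad}$ and $\mathscr{H}$ is defined as a subset of the normalizer. Hence $\Phi$ is well defined and parity preserving. Injectivity is exactly the fact, already noted, that $\mathscr{H}\cap\mathfrak{z}(\mathfrak{g})=\{0\}$: a right-invariant vector field is determined by its value at $e$ via right translations, so vanishing at $e$ forces it to vanish identically. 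Thus the real content is surjectivity.

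For surjectivity, fix $X\in \textnormal{norm}_{Vec(\mathcal{G})}(\mathfrak{g})$ and, by linearity, assume $X$ is homogeneous. Consider the tangent vector $X_e\in T_e(\mathcal{G})$. By the previous Lemma, the right-invariant vector fields are precisely $\mathfrak{z}(\mathfrak{g})$. Right translation identifies $T_e(\mathcal{G})$ with the space of right-invariant fields, exactly as left translation gives $T_e(\mathcal{G})\cong\mathfrak{g}$. So there is a unique $Z\in\mathfrak{z}(\mathfrak{g})$, of the same parity as $X$, with $Z_e=X_e$; concretely $Z_p=(dR_p)_e X_e$. Set $H:=X-Z$. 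Then $H_e=0$, and $H$ lies in the normalizer because the normalizer is a subspace containing both $X$ and $Z$. Hence $H\in\mathscr{H}$ and $X=\Phi(Z,H)$.

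If ``isomorphic'' is meant as Lie superalgebras rather than super vector spaces, I would equip $\mathfrak{z}(\mathfrak{g})\oplus\mathscr{H}$ with the semidirect bracket induced from $Vec(\mathcal{G})$. This works because $\mathfrak{z}(\mathfrak{g})$ is an ideal: for $N$ in the normalizer, $Z\in\mathfrak{z}(\mathfrak{g})$ and $Y\in\mathfrak{g}$, the graded Jacobi identity gives $\llbracket\llbracket N,Z\rrbracket,Y\rrbracket=\llbracket N,\llbracket Z,Y\rrbracket\rrbracket \pm \llbracket Z,\llbracket N,Y\rrbracket\rrbracket=0$, since $\llbracket N,Y\rrbracket\in\mathfrak{g}$. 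With this bracket, $\Phi$ is tautologically a bracket-preserving map, so it is an isomorphism $\mathfrak{z}(\mathfrak{g})\rtimes\mathscr{H}\cong\textnormal{norm}_{Vec(\mathcal{G})}(\mathfrak{g})$.

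The main obstacle is the existence of the right-invariant extension $Z$ in the super setting with $H^\infty$ structure. One must check that right translations $R_p$ are $H^\infty$ diffeomorphisms, so that $p\mapsto (dR_p)_e X_e$ is a genuine smooth vector field. One must also check that odd tangent vectors over the Grassmann algebra give odd fields with the right parity, so that $Z$ has the same parity as $X$. I would handle both points by mirroring the left-invariant construction that yields $T_e(\mathcal{G})\cong\mathfrak{g}$, applied to the opposite supergroup structure under inversion. Connectedness enters only through the Lemma, which identifies $\mathfrak{z}(\mathfrak{g})$ with the right-invariant fields.
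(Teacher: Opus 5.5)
Your proposal is correct and follows the paper's proof: the paper also takes the right-invariant $Z\in\mathfrak{z}(\mathfrak{g})$ with $Z_e=X_e$, writes $X=Z+(X-Z)$ with $X-Z\in\mathscr{H}$, and relies on the previously noted fact $\mathscr{H}\cap\mathfrak{z}(\mathfrak{g})=\{0\}$ for uniqueness. Your explicit injectivity argument and your semidirect bracket, in which $\mathfrak{z}(\mathfrak{g})$ is an ideal so that the isomorphism is with $\mathfrak{z}(\mathfrak{g})\rtimes\mathscr{H}$ rather than a direct product of Lie superalgebras, make precise what the paper leaves implicit.
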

	\begin{proof}
	Let us consider $X \in \textnormal{norm}_{Vec(\mathcal{G})}(\mathfrak{g})$ and $Z \in \mathfrak{z}(\mathfrak{g})$, a right-invariant vector field satisfying $X_e=Z_e$. Then it is obtained that $(X-Z)_e=0$ that is $X-Z \in \mathscr{H}$. And as  $Z \in \mathfrak{z}(\mathfrak{g})$, which is the kernel of the map $X \mapsto \textnormal{ad} (X)_{\mathfrak{g}} $, and hence $X\in \textnormal{ad} (X)_{\mathfrak{g}}=\textnormal{ad} (X-Z)_{\mathfrak{g}}$. Thus, denoting $Y=X-Z$, any element $X \in \textnormal{norm}_{Vec(\mathcal{G})}(\mathfrak{g})$ can be expressed as $X=Z+Y\in \mathfrak{z}(\mathfrak{g}) \oplus \mathscr{H}$, which proves the case. 
	\end{proof}
	
	\begin{theorem} \label{thmnorm}
	Consider the map $\Phi : \textnormal{norm}_{Vec(\mathcal{G})} (\mathfrak{g}) \rightarrow \mathfrak{g} \otimes \textnormal{Sder} (\mathfrak{g}) $ defined by $\Phi(Z+Y)=(Z',\textnormal{ad} (Y)_{\mathfrak{g}}) $ for all $Z\in \mathfrak{z}(\mathfrak{g})$ and all $Y\in \mathscr{H}$, where $Z'$ is the left-invariant vector field corresponding to $Z$. Then,
	\begin{enumerate}
		\item if $\mathcal{G}$ is a connected Lie supergroup, then $\Phi$ is an injective homomorphism of Lie superalgebras.
		\item if $\mathcal{G}$ is a simply connected Lie supergroup, then $\Phi$ is surjective and hence an isomorphism of Lie superalgebras.
	\end{enumerate}
	\end{theorem}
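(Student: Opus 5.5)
The plan is to treat $\mathfrak{g}\otimes\textnormal{Sder}(\mathfrak{g})$ as the semidirect product $\mathfrak{g}\rtimes\textnormal{Sder}(\mathfrak{g})$. Its underlying super vector space is $\mathfrak{g}\oplus\textnormal{Sder}(\mathfrak{g})$, and its bracket is
\[
\llbracket (A,D),(A',D')\rrbracket=\bigl(\llbracket A,A'\rrbracket + D(A') - (-1)^{|A'||D|}D'(A),\ \llbracket D,D'\rrbracket\bigr).
\]
I will use the decomposition $\textnormal{norm}_{Vec(\mathcal{G})}(\mathfrak{g})=\mathfrak{z}(\mathfrak{g})\oplus\mathscr{H}$ from the preceding theorem. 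I also use the earlier lemma identifying $\mathfrak{z}(\mathfrak{g})$ with the right-invariant fields. The map $Z\mapsto Z'$ sends a right-invariant field to the left-invariant field with the same value at $e$. Via the inversion map, this map is an anti-isomorphism, $\llbracket Z_1,Z_2\rrbracket'=-\llbracket Z_1',Z_2'\rrbracket$. So before anything else I will check, and if needed fix, the sign convention in the first component, for instance by using $-Z'$. This normalization is only bookkeeping.

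\textbf{Part 1: homomorphism and injectivity.} Write $X=Z_1+Y_1$ and $X'=Z_2+Y_2$. The bracket $\llbracket X,X'\rrbracket$ splits into four pieces, which I will place one at a time.
\begin{enumerate}
\item $\llbracket Z_1,Z_2\rrbracket$ lies in $\mathfrak{z}(\mathfrak{g})$.
\item $\llbracket Y_1,Y_2\rrbracket$ lies in $\mathscr{H}$, because $\mathscr{H}$ is a subsuperalgebra. Its $\textnormal{ad}$-image is $\llbracket\textnormal{ad}Y_1,\textnormal{ad}Y_2\rrbracket$, since $\textnormal{ad}$ is a homomorphism.
\item $\llbracket Y_1,Z_2\rrbracket$ lies in $\mathfrak{z}(\mathfrak{g})$, because $\mathfrak{z}(\mathfrak{g})$ is an ideal.
\end{enumerate}
The key identity I must prove is $\llbracket Y,Z\rrbracket' = \textnormal{ad}(Y)(Z')$ for $Y\in\mathscr{H}$ and $Z\in\mathfrak{z}(\mathfrak{g})$, up to the same sign. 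To prove it, I evaluate both sides at $e$. Both are invariant fields, so equality at $e$ suffices. The field $Z-Z'$ vanishes at $e$. Also $Y_e=0$, so the flow of $Y$ fixes $e$. Hence $\llbracket Y,Z-Z'\rrbracket_e$ is computed by the linearization of $Y$ at its zero. I would show that this linearization annihilates $Z-Z'$ at $e$, using that $Y$ normalizes both left-invariant fields and, by the ideal property, right-invariant fields. This gives $\llbracket Y,Z\rrbracket_e=\llbracket Y,Z'\rrbracket_e$.

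With the identity in hand, $\Phi$ is seen to respect brackets. Injectivity then follows. If $\Phi(Z+Y)=0$, then $Z'=0$, so $Z=0$. Also $\textnormal{ad}(Y)_{\mathfrak{g}}=0$, so $Y\in\mathfrak{z}(\mathfrak{g})\cap\mathscr{H}=\{0\}$.

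\textbf{Part 2: surjectivity.} Since $\mathfrak{z}(\mathfrak{g})\to\mathfrak{g}$, $Z\mapsto Z'$, is onto, it suffices to realize every $D\in\textnormal{Sder}(\mathfrak{g})$ as $\textnormal{ad}(Y)_{\mathfrak{g}}$ for some $Y\in\mathscr{H}$.
\begin{enumerate}
\item For even $D$, I form the one-parameter group $e^{tD}$ of automorphisms of $\mathfrak{g}$.
\item Using simple connectedness and the super Lie correspondence (Lie's second theorem in the $H^\infty$/Rogers setting), I integrate $e^{tD}$ to automorphisms $\varphi_t$ of $\mathcal{G}$ with $(d\varphi_t)_e=e^{tD}$.
\item The infinitesimal generator $Y$ of $\varphi_t$ vanishes at $e$.
\item Since $\varphi_t$ maps left-invariant fields to left-invariant fields, differentiating gives $\llbracket Y,A\rrbracket=-D(A)$ for $A\in\mathfrak{g}$, with signs adjusted. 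So $Y\in\mathscr{H}$ realizes $D$, up to sign.
\end{enumerate}
For odd $D$ there is no genuine flow. Here I would pass to the Grassmann-extended setting, replacing $D$ by $\tau D$ with $\tau$ an odd Grassmann parameter, which makes it even. Alternatively, I would construct $Y$ directly as the "linear vector field" $Y_{\exp A}=\tfrac{d}{ds}\exp(A+sDA)$-type formula and extend it using connectedness.

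\textbf{Main obstacle.} I expect the main obstacle to be this integration step for odd superderivations, together with the validity of Lie's second theorem in the concrete supergroup setting. I would first try the even case rigorously. I would then reduce the odd case via the parity-shift/Grassmann-parameter trick, using that $\textnormal{Sder}(\mathfrak{g})$ is spanned by homogeneous elements.
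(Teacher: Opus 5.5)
Your proposal is correct and follows essentially the same route as the paper. Both use the splitting $\textnormal{norm}_{Vec(\mathcal{G})}(\mathfrak{g})=\mathfrak{z}(\mathfrak{g})\oplus\mathscr{H}$ and the sign-corrected identification $Z\mapsto Z'$; the paper takes $Z'_e=-Z_e$, which is exactly your fix. Both get surjectivity by integrating $e^{tD}$ to automorphisms $\phi_t$ of the simply connected $\mathcal{G}$, whose generator vanishes at $e$ and acts on $\mathfrak{g}$ as $-D$.

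You are in fact more careful than the paper. It asserts the homomorphism property ``by construction'' without checking the cross terms $\llbracket Y_1,Z_2\rrbracket$, and it applies $\exp(tD)$ to odd $D$ without comment, so your flagged odd-$D$ step is a gap the paper shares. Your cross-term identity is also easier than you expect. Since $Y_e=0$ and $(Z-Z')_e=0$, the formula $\llbracket Y,W\rrbracket_e f=Y_e(Wf)\pm W_e(Yf)$ already gives $\llbracket Y,Z-Z'\rrbracket_e=0$, with no normalizer argument needed.
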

	
	\begin{proof}
	\begin{enumerate}
		\item Let $Z$ be the right-invariant vector field on $\mathcal{G}$ and $Z'$ be the left-invariant vector field satisfying $Z_e=-Z_e'$. Then the linear map $Z \mapsto Z': \mathfrak{z}(\mathfrak{g}) \mapsto \mathfrak{g}$ is an isomorphism. Thus, by construction of $\Phi$ is an injective Lie superalgebra homomorphism.
		
		\item As simply connectedness implies connectedness,  it is enough to show surjectivity. Consider the fact that for a simply connected Lie supergroup $\mathcal{G}$ there exists a unique automorphism $\phi_t$ of $\mathcal{G}$ such that $(d \phi_t)_e = \textnormal{exp}(tD)$, where $D \in \textnormal{Sder}(\mathfrak{g})$ and $\textnormal{exp}(tD)$ is an automorphism of $\mathfrak{g}$. To show $X \in \textnormal{norm}_{Vec(\mathcal{G})}(\mathfrak{g})$ is the vector field associated with the flow $\phi_t^X(g),~ p \in \mathcal{G}$, by taking $Y \in \mathfrak{g}$ one gets
		\begin{align*}
			\llbracket X,Y \rrbracket_p =& \dfrac{d}{dt} \biggr\rvert_{t=0} d\phi_{-t}^X dL_{\phi_{-t}^X (p)}Y_e \\
			=& \dfrac{d}{dt} \biggr\rvert_{t=0} dL_p d \phi_{-t}^X\\
			=& dL_p \dfrac{d}{dt} \biggr\rvert_{t=0} \textnormal{exp} (-tD) Y_e\\
			=&-(DY)_p
		\end{align*}
	\end{enumerate}
	Thus, $X \in \textnormal{norm}_{Vec(\mathcal{G})}(\mathfrak{g})$ and it is obtained $\textnormal{ad}(X)_{\mathfrak{g}}=-D$ and $X_e=\dfrac{d}{dt} \biggr\rvert_{t=0} \phi_t^X(e)=0$. Hence, the proof is done.
	\end{proof}

	\begin{remark}
	By the theorem \ref{thmnorm}, the normalizer depends on the set of superderivation and the structure of $\textnormal{Sder}(\mathfrak{g})$ may vary somewhere between set of inner superderivation $Inn(\mathfrak{g})$ to $End(\mathfrak{g})$. For example,	taking $\mathcal{G}=\mathbb{R}^{n\vert m}$, it is found that any linear transformation $T:\mathbb{R}^{n\vert m} \rightarrow \mathbb{R}^{n\vert m}$ is a derivation.  \\
    \end{remark}
	
  \section{Control System using normalizer}\label{sec4}
  
  In this section, a generalized form of control system on Lie supergroup is proposed using the normalizer of the corresponding Lie superalgebra.  The idea of taking normalizer began by Ayala and Tirao \cite{ayala1999linear} in the process of generalizing the linear control system to Lie group. The intuition was simple that a constant vector   determines a left-invariant vector field on the usual Euclidean space and the ordinary Lie bracket of a linear vector field  with a constant vector field is always constant on  usual Euclidean space. However, it is seen that this generalization covers a lot of control systems. Using those results, here we propose the control system on Lie supergroup, which is useful in tackling the dynamical systems consisting of both fermionic and bosonic variables.

  \begin{definition}
  	The state equation of a control system on a Lie supergroup $\mathcal{G}$ is given by
  	\begin{equation} \label{linlie}
  		\dot{x}(t)= X(x(t))+\sum_{i=1}^{k} u_i(t)Y^i(x(t))+\sum_{j=1}^{l} \nu_j(t)\Xi^j(x(t))
  	\end{equation} 
  	$x\in \mathcal{G}$; a connected Lie supergroup ,\\
  	$X \in  \textnormal{norm}_{Vec(\mathcal{G})}(\mathfrak{g})$; the normalizer of  $\mathfrak{g}$ in the Lie superalgebra $Vec(\mathcal{G})$,\\
  	$u=(u_1,~u_2,\cdots , u_{k};\nu_1,\cdots,\nu_{l}) $ ; piece-wise constant function in $\mathbb{R}^{k \vert l}$ and\\
  	$Y^i, \Xi^j \in \mathfrak{g}$; set of left-invariant vector fields  of  $\mathcal{G}$. 
  \end{definition}

  The beauty of taking such a model problem using the normalizer is that, this covers a wide range of control systems such as
  
  \begin{enumerate}
  	\item A linear control system on superspace is given by
  	\begin{equation*}
  		\dot{x}(t)=Ax(t)+\sum_{i=1}^{k} u_i(t)b^i+\sum_{j=1}^{l} \nu_j(t)\beta^j,
  	\end{equation*}
  	where $x(t) \in \mathcal{G}=\mathbb{R}^{m\vert n},~A\in \textnormal{Sder}(\mathfrak{g})=\mathfrak{gl}_{m\vert n}(\mathbb{R}),~ b^i,\beta^j \in \mathfrak{g} = \mathbb{R}^{m\vert n}.$\\
  	
  	\item A bilinear control system on superspace is given by
  	\begin{equation*}
  		\dot{x}(t)=Ax(t)+\sum_{i=1}^{k} u_i(t)B^ix(t)+\sum_{j=1}^{l} \nu_j(t)\Xi^jx(t),
  	\end{equation*}
  	where $x(t) \in \mathcal{G}=\mathbb{R}^{m\vert n},~A,B^i,\Xi^j\in \textnormal{Sder}(\mathfrak{g})=\mathfrak{gl}_{m\vert n}(\mathbb{R}).$\\
  	
  	\item A left-invariant control system on Lie supergroup is given by
  	\begin{equation*}
  		\dot{x}(t)=Y(x(t))+\sum_{j=1}^{k+l} u_j(t) Y^j(x(t)),
  	\end{equation*}
  	where $x(t) \in \mathcal{G},~Y,Y^j\in \mathfrak{g}.$\\
  	
  	\item A linear control system on Lie supergroup is given by
  	\begin{equation*}
  		\dot{x}(t)=X(x(t))+\sum_{i=1}^{k} u_iY^i(x(t))+\sum_{j=1}^{l} \nu_j\Xi^j(x(t)),
  	\end{equation*}
  	where $x(t) \in \mathcal{G},~X\in \textnormal{Sder}(\mathfrak{g}), ~Y^j\in \mathfrak{g}.$
  \end{enumerate}

  The first type of system is already dealt with in \cite{sahoo2024controllability} and here we focus on the fourth type.

  \subsection{Linear Control System on Lie Supergroup}\label{sec4.1}

  \par While real-world systems often exhibit nonlinearities, understanding linear systems provides a crucial foundation for tackling more complex nonlinear control problems. This section gives the expression for linear control system on a Lie supergroup. The system becomes a linear system when the drift vector field is linear.  Using the normalizer characterization, one can observe from the theorem \ref{thmnorm} that $X\in  \textnormal{norm}_{Vec(\mathcal{G})}(\mathfrak{g})$ is linear if $X_e=0.$ \\
  
  \par The linear control system on Lie Supergroup is dealt in \cite{sahoo2025controllability} taking normalizer notion but the system considered there involves only the even left-invariant vector fields. Here, we propose a more general version that also involves the odd left-invariant vector fields and it truly generalizes the linear system on superspace.

  \begin{definition}
  A linear control system on a Lie supergroup is determined by $\Sigma = (\mathcal{G}, \mathfrak{D})$, where the state-space $\mathcal{G}$ is a real finite-dimensional Lie supergroup with Lie superalgebra $\mathfrak{g}$ and the dynamic $\mathfrak{D}$ is given by the family of even vector fields associated with $\Sigma$, i.e.,
  $$ \mathfrak{D}=\left\lbrace X+ \sum_{i=1}^{k} u_i Y^i +\sum_{j=1}^{l} \nu_j\Xi^j  \Bigg\vert u=(u;\nu) \in \mathbb{R}_S^{k \vert l} \right\rbrace .$$
  Here, $u=(u_1,~u_2,\cdots , u_{k};\nu_1,\cdots,\nu_{l}): \left[ 0,\infty \right) \rightarrow \mathbb{R}^{k \vert l}$ is the input function from the class of admissible controls $\mathcal{U}$, the drift vector field $X$ lies in the set of the normalizer of $\mathfrak{g}$ in the Lie superalgebra $Vec(\mathcal{G})$ of all $H^{\infty}$ vector fields on $\mathcal{G}$ and the control vectors $Y^i,\Xi^j$ are the even and odd left-invariant vector fields of $\mathcal{G}$. 
  So, the linear system on superspace is expressed as the family of differential equations on $\mathcal{G}$ of the form
  \begin{equation} \label{linlie}
  	\dot{x}(t)= X(x)+\sum_{i=1}^{k} u_i(t)Y^i(x)+\sum_{i=1}^{l} \nu_i(t)\Xi^i(x),
  \end{equation} 
  \noindent where $x$ lies in a connected Lie supergroup $\mathcal{G}$ and others are defined above.
  \end{definition}
  
  
  \begin{remark}
  Observe, when the Lie supergroup is taken to be superspace $\mathcal{G}=\mathbb{R}_S^{m\vert n}$, then the system reduces to the required form of linear control system, 
  \begin{align*}
  	\dot{x}(t)&= Ax+\sum_{i=1}^{k} u_ib^i+\sum_{i=1}^{l} \nu_i\beta^i\\
  	&= Ax+Bu.
  \end{align*} 

  \end{remark}

  From the linear control system on super Euclidean space, one can observe that linear vector field $\mathcal{A}$ generates the flow $\{\textnormal{exp}(t\mathcal{A}):t\in \mathbb{R}^{1\vert 0}\}$ which is a $(1\vert 0)-$parameter subgroup of automorphism group $\textnormal{Aut}(\mathbb{R}^{m\vert n})$ of $\mathbb{R}^{m\vert n}$. 
  So, a super vector field on Lie supergroup is defined to be linear if it induces a $(1\vert 0)-$parameter subgroup of automorphism $Aut(\mathcal{G})$ of Lie supergroup $\mathcal{G}$. As the dynamics of the drift vector field are given by the help of normalizer of Lie superalgebra $\mathfrak{g}$ above, the following section aims to show that the proposed drift vector field induces a subgroup of automorphism.

  \subsection{Linear Vector field}
  This section is crucial in developing results on linear vector fields, which confirms that both the definitions taken for a linear vector field on Lie supergroup are equivalent. 
  
  Let $T_e\mathcal{G}$ be the tangent space of $\mathcal{G}$ at its identity $e$. Then, for a  simply connected Lie supergroup $\mathcal{G}$, the map $\textnormal{Aut}(\mathcal{G}) \mapsto \textnormal{Aut}(T_e\mathcal{G}) :  \phi \mapsto (d\phi )_e $ is an isomorphism. The Lie superalgebra corresponding to the Lie supergroup $\textnormal{Aut}(T_e\mathcal{G})$ is $\textnormal{Sder}(T_e\mathcal{G})$. Then Lie superalgebra of $\textnormal{Aut}(\mathcal{G})$ can also be identified with $\textnormal{Sder}(T_e\mathcal{G})$. Taking $\textnormal{Exp}:\textnormal{aut}(\mathcal{G}) \rightarrow \textnormal{Aut}(\mathcal{G})$ to be the corresponding exponential map, it is obtained that 
  \begin{align*}
  (d ~\textnormal{Exp} D)_e &= e^D, \textnormal{ and }\\
  (\textnormal{Exp} D)(\textnormal{exp} X) &= \textnormal{exp}\lbrace (d~ \textnormal{Exp} D)_eX \rbrace =\textnormal{exp} \lbrace e^D(X)\rbrace
  \end{align*}
  holds for any $D \in \textnormal{aut}(\mathcal{G}) \textnormal{ and } X \in T_e\mathcal{G} $.  If $F \in T_e\mathcal{G}$, then denote $\Tilde{F}$ as the corresponding left-invariant vector field on $\mathcal{G}$ satisfying $\Tilde{F}_e=F$ and hence, $T_e\mathcal{G}$ is identified with the Lie superalgebra $\mathfrak{g}$. Now, identify the vector fields on $\mathcal{G}$ with the functions of $\mathcal{G}$ into $T_e\mathcal{G} $. Let $F: \mathcal{G} \rightarrow T_e\mathcal{G}$, then
  $$ \Tilde{F}_x =dL_x(F(x))=\widetilde{F(x)}_x, ~ ~ ~x \in \mathcal{G}$$
  Thus, the map $F \mapsto \Tilde{F}$ establishes an isomorphism between $H^\infty(\mathcal{G}, T_e\mathcal{G})$ and $Vec(\mathcal{G})$.
  
  \begin{lemma}\label{chap4lem3.1}
  Given $Y \in T_e\mathcal{G}$ and $F \in H^\infty (\mathcal{G}, T_e\mathcal{G})$, let $H \in  H^\infty (\mathcal{G}, T_e\mathcal{G}) $ be defined by
  \begin{equation}\label{chap4eqn1}
  	H(x)=\Tilde{Y}_x(F)+ \llbracket Y,F(x) \rrbracket, ~ ~ ~ x \in \mathcal{G} .
  \end{equation}
  
  Then $\Tilde{H}= \llbracket \Tilde{Y}, \Tilde{F} \rrbracket . $ Moreover, for $\Tilde{F} \in norm_{Vec(\mathcal{G})} (\mathfrak{g})$, the following result is obtained

  \begin{equation}
  	\Phi (\Tilde{F})=\begin{cases}
  		\left( -F(e), ~\textnormal{ad} (F(e))-dF_e \right), & \text{if $Y$ is even},\\
  		\left( -F(e), ~\textnormal{ad} (F(e))-(-1)^{\vert F\vert}dF_e \right), & \text{if $Y$ is odd}.
  	\end{cases}
  \end{equation}

  \end{lemma}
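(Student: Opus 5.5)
We split the argument into the bracket formula $\Tilde{H}=\llbracket \Tilde{Y},\Tilde{F}\rrbracket$ and the computation of $\Phi(\Tilde{F})$. Throughout we work in a fixed homogeneous basis $\{E_a\}$ of $T_e\mathcal{G}$ and write $F=\sum_a f^a E_a$ with $f^a\in H^\infty(\mathcal{G})$, so that $\Tilde{F}=\sum_a f^a \Tilde{E}_a$. We keep track of the sign convention that coefficient functions are placed to the left of the left-invariant frame.

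\textbf{The bracket formula.} We apply the graded Leibniz rule for the superbracket of vector fields:
$$\llbracket \Tilde{Y}, f^a\Tilde{E}_a\rrbracket = \Tilde{Y}(f^a)\,\Tilde{E}_a + (-1)^{|Y||f^a|} f^a\llbracket \Tilde{Y},\Tilde{E}_a\rrbracket .$$
Summing over $a$, the first terms give $\Tilde{Y}_x(F)$. For the second terms we use that $Y\mapsto\Tilde{Y}$ is a Lie superalgebra isomorphism onto $\mathfrak{g}$, so $\llbracket\Tilde{Y},\Tilde{E}_a\rrbracket=\widetilde{\llbracket Y,E_a\rrbracket}$. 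Moving the scalar $f^a$ through $Y$ (absorbing the Koszul sign) reassembles these terms into $\llbracket Y,F(x)\rrbracket$. Pointwise this gives $\llbracket\Tilde{Y},\Tilde{F}\rrbracket_x=dL_x(H(x))$, which is exactly $\Tilde{H}_x$ under the identification $F\mapsto\Tilde{F}$ of $H^\infty(\mathcal{G},T_e\mathcal{G})$ with $Vec(\mathcal{G})$.

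\textbf{Computing $\Phi(\Tilde{F})$.} We use the decomposition $\Tilde{F}=Z+(\Tilde{F}-Z)$ from the preceding theorem, where $Z$ is the right-invariant field with $Z_e=\Tilde{F}_e=F(e)$.
\begin{itemize}
\item By the convention in Theorem \ref{thmnorm}, $Z'$ is the left-invariant field with $Z'_e=-Z_e$. This gives the first component $-F(e)$.
\item Since $Z$ lies in the kernel of $\textnormal{ad}$, the second component is $\textnormal{ad}(\Tilde{F}-Z)_{\mathfrak{g}}=\textnormal{ad}(\Tilde{F})_{\mathfrak{g}}$, viewed as a superderivation of $T_e\mathcal{G}$.
\item To evaluate it on $Y$, we note that $\Tilde{F}$ is in the normalizer, so $\llbracket \Tilde{F},\Tilde{Y}\rrbracket$ is left-invariant. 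Hence it is determined by its value at $e$, and by graded skew-symmetry
$$\llbracket \Tilde{F},\Tilde{Y}\rrbracket=-(-1)^{|F||Y|}\llbracket\Tilde{Y},\Tilde{F}\rrbracket .$$
\item Evaluating the first part of the lemma at $x=e$ gives $H(e)=dF_e(Y)+\llbracket Y,F(e)\rrbracket$, using $\Tilde{Y}_e(F)=dF_e(Y)$.
\end{itemize}
Combining these,
$$\textnormal{ad}(\Tilde{F})(Y)=-(-1)^{|F||Y|}\big(dF_e(Y)+\llbracket Y,F(e)\rrbracket\big).$$
By graded skew-symmetry, $-(-1)^{|F||Y|}\llbracket Y,F(e)\rrbracket=\llbracket F(e),Y\rrbracket=\textnormal{ad}(F(e))Y$. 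The expression therefore reduces to $\textnormal{ad}(F(e))Y-(-1)^{|F||Y|}dF_e(Y)$.

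\textbf{The two cases.} If $Y$ is even, the sign $(-1)^{|F||Y|}$ is $1$, and we get $\textnormal{ad}(F(e))-dF_e$. If $Y$ is odd, the sign is $(-1)^{|F|}$, which yields the second case of the statement.

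\textbf{Main obstacle.} The hard part is the sign bookkeeping in the first step: checking that the Koszul signs from moving $f^a$ past $\Tilde{Y}$ and past $Y$ cancel. If they do not cancel, that term must be rewritten as $(-1)^{|Y||F|}\llbracket Y,F(x)\rrbracket$ and the case formula adjusted. We would first verify this on a homogeneous $F$ of fixed parity, where all $f^a E_a$ have total parity $|F|$, and then extend by linearity.
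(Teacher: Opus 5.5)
Your proposal is correct and follows the paper's proof. The bracket formula comes from the graded Leibniz rule together with $\llbracket\tilde{Y},\tilde{E}\rrbracket=\widetilde{\llbracket Y,E\rrbracket}$; your frame expansion $F=\sum_a f^aE_a$ is just the coordinate form of the paper's step of freezing $F$ at $x$ and differentiating along $x\,\textnormal{exp}(tY)$. $\Phi(\tilde{F})$ is then read off, as in the paper, from $\tilde{F}=Z+(\tilde{F}-Z)$, $Z'_e=-F(e)$, and $\llbracket\tilde{F},\tilde{Y}\rrbracket_e$ evaluated via the first part.

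The sign worry you flag does not arise: both the Leibniz rule and the $\mathbb{R}_S$-linear extension $\llbracket Y,\lambda E\rrbracket=(-1)^{|Y||\lambda|}\lambda\llbracket Y,E\rrbracket$ contribute the same factor $(-1)^{|Y||f^a|}$, so nothing needs adjusting. Your use of the graded skew-symmetry $\llbracket\tilde{F},\tilde{Y}\rrbracket=-(-1)^{|F||Y|}\llbracket\tilde{Y},\tilde{F}\rrbracket$ is more careful than the paper's, which drops this sign at that step, and it lands exactly on the stated case formula.
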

  
  \begin{proof}
  By the property of Lie superbracket
  $$ \left( \llbracket \Tilde{Y}, \Tilde{F} \rrbracket f  \right) (x) = 
  \left( \Tilde{Y} \left( \Tilde{F}f \right) \right) (x) - (-1)^{\vert \Tilde{Y} \vert \vert \Tilde{F}\vert}  \left( \Tilde{F} \left( \Tilde{Y}f \right) \right) (x)$$
  Simplify first term
  \begin{align*}
  	\left( \Tilde{Y} \left( \Tilde{F}f \right) \right) (x) 
  	&=   \dfrac{d}{dt} \biggr\rvert_{t=0} \left( \Tilde{F}f \right) (x ~\textnormal{exp} tY) \\
  	&= \dfrac{d}{dt} \biggr\rvert_{t=0} \left( F (\widetilde{x ~\textnormal{exp} } tY) f \right) (x ~\textnormal{exp} tY) \\ 
  	&= \left( \widetilde{(\Tilde{Y}F)(x)}f  \right) (x) + \left( \Tilde{Y} \left( \widetilde{(F)(x)}f  \right) \right) (x)
  \end{align*}
  Simplify second term
  
  $$ \left( \Tilde{F} \left( \Tilde{Y}f \right) \right) (x) 
  = \Tilde{F}_x \left( \Tilde{Y} f \right) \\
  = \left( \widetilde{F(x)} \left( \Tilde{Y} f \right) \right) (x).$$
  
  So, the superbracket
  \begin{align*}
  	\left( \llbracket \Tilde{Y}, \Tilde{F} \rrbracket f \right)_x 
  	&= \left( \widetilde{(\Tilde{Y}F)(x)} f \right) (x) + \left( \llbracket \Tilde{Y}, \widetilde{F(x)} \rrbracket f \right) (x)\\
  	&=  \left( \widetilde{(\Tilde{Y}F)(x)} f \right) (x) + \left( \llbracket \widetilde{ \Tilde{Y}, F(x)} \rrbracket f \right) (x)
  \end{align*}
  Therefore it is obtained that,
  $$  \llbracket \Tilde{Y}, \Tilde{F} \rrbracket_x= \widetilde{(\Tilde{Y}_x(F))}_x+ \llbracket \widetilde{ \Tilde{Y}, F(x)} \rrbracket_x= \left( \widetilde{(\Tilde{Y}_x(F))+ \llbracket  \Tilde{Y}, F(x) \rrbracket} \right)_x$$
  
  For computing $\Phi(\Tilde{F})$, take $\Tilde{F}=Z+X$ with $Z \in \mathfrak{z}(\mathfrak{g})$ and $X \in \mathscr{H}$. Then $\Phi(\Tilde{F}) =\left( Z_e, ~\textnormal{ad} (X)_{\mathfrak{g}} \right)=\left( -F(e), ~\textnormal{ad} (X)_{\mathfrak{g}} \right) $. Now given $Y \in T_e\mathcal{G}$, it is obtained
  $$ \llbracket X, \Tilde{Y} \rrbracket_e=\llbracket \Tilde{F}, \Tilde{Y} \rrbracket_e = -\Tilde{Y}_e(F)-\llbracket Y, F(e) \rrbracket =-dF_e(Y)+ (-1)^{\vert F \vert \vert Y \vert}\llbracket F(e),Y\rrbracket. $$
  Thus, the proof is done.
  
  \end{proof}

  \begin{theorem}
  For $F \in H^\infty (\mathcal{G},T_e\mathcal{G})$, the correspondence vector field $\tilde{F} \in \textnormal{norm}_{Vec(\mathcal{G})}(\mathfrak{g})$ iff for all $Y\in T_e\mathcal{G}$ the following holds
  \begin{equation}\label{chap4eqn2}
  	F(x \textnormal{ exp}Y)=(e^{-adY})F(x)+\left(  \dfrac{1-e^{-adY}}{adY}\right)  (  Y(F) + \llbracket Y,F(e) \rrbracket).
  \end{equation}
  \end{theorem}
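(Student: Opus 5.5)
Fix $F\in H^\infty(\mathcal{G},T_e\mathcal{G})$ and $Y\in T_e\mathcal{G}$. Consider the curve $\gamma(t)=F(x\,\textnormal{exp}\,tY)$ in $T_e\mathcal{G}$ for $t\in\mathbb{R}^{1\vert 0}$. The idea is to turn the normalizer condition into a linear ODE for $\gamma$ and then solve it.

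By Lemma \ref{chap4lem3.1}, $\llbracket \Tilde{Y},\Tilde{F}\rrbracket=\Tilde{H}$ with $H(x)=\Tilde{Y}_x(F)+\llbracket Y,F(x)\rrbracket$. Hence $\Tilde{F}\in\textnormal{norm}_{Vec(\mathcal{G})}(\mathfrak{g})$ if and only if $H$ is constant for every $Y$. When it is constant, evaluating at $e$ identifies the constant as $c_Y:=Y(F)+\llbracket Y,F(e)\rrbracket$, where $Y(F)=dF_e(Y)$.

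For the forward direction, assume $\Tilde{F}$ is in the normalizer. Since $\Tilde{Y}$ generates the flow $x\mapsto x\,\textnormal{exp}\,tY$,
$$\dot\gamma(t)=\Tilde{Y}_{x\exp tY}(F)=c_Y-\llbracket Y,\gamma(t)\rrbracket=c_Y-(\textnormal{ad}Y)\gamma(t),\qquad \gamma(0)=F(x).$$
This is a constant-coefficient linear ODE in the finite-dimensional super vector space $T_e\mathcal{G}$. Variation of constants gives
$$\gamma(t)=e^{-t\,\textnormal{ad}Y}F(x)+\int_0^t e^{-(t-s)\textnormal{ad}Y}c_Y\,ds=e^{-t\,\textnormal{ad}Y}F(x)+\frac{1-e^{-t\,\textnormal{ad}Y}}{\textnormal{ad}Y}\,c_Y,$$
where the fraction is read as its power series in $\textnormal{ad}Y$. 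Setting $t=1$ yields \eqref{chap4eqn2}.

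For the converse, assume \eqref{chap4eqn2} holds for all $Y$. Replace $Y$ by $tY$; note that the right-hand factor scales, since $tY(F)+\llbracket tY,F(e)\rrbracket=t\,c_Y$. Then differentiate at $t=0$. This gives $\Tilde{Y}_x(F)=-\llbracket Y,F(x)\rrbracket+c_Y$, so $H\equiv c_Y$ is constant and $\Tilde{H}$ is left-invariant. Since this holds for every $Y\in\mathfrak{g}$, $\Tilde{F}\in\textnormal{norm}_{Vec(\mathcal{G})}(\mathfrak{g})$.

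The main obstacle is justifying these steps in the super setting. First, the parameter must be even ($t\in\mathbb{R}^{1\vert 0}$). For odd $Y$, $\textnormal{exp}\,tY$ only makes sense via Grassmann-valued parameters, so I would treat $tY$ with $t$ even and $Y$ possibly odd as an even element of $\mathfrak{g}\otimes\mathbb{R}_S$. Second, uniqueness of solutions of the linear ODE over $\mathbb{R}_S$ has to be checked. I would handle it by expanding in the Grassmann basis, which reduces the ODE to a finite system of real ODEs to which classical uniqueness applies. Third, the signs $(-1)^{\vert F\vert\vert Y\vert}$ must be tracked; they are absorbed by working with homogeneous components of $F$, as in Lemma \ref{chap4lem3.1}.
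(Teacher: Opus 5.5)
Your proof is correct and follows the paper's route: you use Lemma \ref{chap4lem3.1} to reduce membership in the normalizer to constancy of $H$ (with constant value $Y(F)+\llbracket Y,F(e)\rrbracket$), then solve the resulting linear ODE for $G(t)=F(x\,\textnormal{exp}\,tY)$. The differences are minor: you solve the ODE by variation of constants rather than by summing the Maclaurin series built from $G^{(n)}(t)=(-\textnormal{ad}Y)^nG(t)+(-\textnormal{ad}Y)^{n-1}B(Y)$, which spares you justifying that the Taylor series converges to $G$; and you write out the converse (replace $Y$ by $tY$ and differentiate at $t=0$), which the paper leaves implicit and only uses later, in the proof of its first corollary.
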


  \begin{proof}
  Given $F \in H^\infty (\mathcal{G},T_e\mathcal{G})$, $\tilde{F} \in \textnormal{norm}_{Vec(\mathcal{G})}(\mathfrak{g})$ if and only if $\llbracket Y,F(x) \rrbracket \in \mathfrak{g},~ \forall Y\in T_e(\mathcal{G})$, and by lemma \ref{chap4lem3.1}, this will happen iff
  $$\tilde{Y}_x(F)+ \llbracket Y,F(x) \rrbracket = Y(F)+\llbracket Y,F(e) \rrbracket, \qquad \forall  x\in \mathcal{G}$$
  By taking $B(Y)=Y(F)+\llbracket Y,F(e) \rrbracket$, one can have
  $$ \frac{d}{dt} F(x~ \textnormal{exp} (tY)) = -\llbracket Y, F(x ~\textnormal{exp} (tY))\rrbracket + B(Y) ~~\forall x,t. $$
  Taking $G(t) = F(x~ \textnormal{exp} (tY)) $, one can obtain
  \begin{align*}
  	G^{(1)}(t)&=-adY(G(t))+B(Y)\\
  	G^{(2)}(t)&=(-adY)^2(G(t))-adY(B(Y))\\
  	\vdots\\
  	G^{(n)}(t)&=(-adY)^n(G(t))+(-adY)^{n-1}(B(Y)).
  \end{align*}
  Thus, by Maclaurin series expansion
  \begin{align*}
  	G(t)&=\sum_{n=0}^{\infty}\dfrac{(-adY)^n}{n!}F(x)t^n + \sum_{n=1}^{\infty} \dfrac{(-adY)^{n-1}}{n!}B(Y)t^n \\
  	&= e^{-tadY}f(x)+\left(\dfrac{1-e^{-tadY}}{adY}\right)B(Y).
  \end{align*}
  \end{proof}

  \begin{corollary}
  If $\tilde{F} \in \textnormal{norm}_{Vec(\mathcal{G})}(\mathfrak{g})$, then for all $Y\in T_e(\mathcal{G})$,
  $$ dF_x(\tilde{Y}_x) = \llbracket Y, F(e)-F(x) \rrbracket+dF_e(Y). $$
  \end{corollary}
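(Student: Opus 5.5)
The plan is to read the corollary off directly from the characterization established in the proof of the preceding theorem (equivalently, from Lemma \ref{chap4lem3.1}). That proof showed that $\tilde F\in \textnormal{norm}_{Vec(\mathcal{G})}(\mathfrak{g})$ holds if and only if, for every $Y\in T_e\mathcal{G}$ and every $x\in\mathcal{G}$,
$$\tilde{Y}_x(F)+\llbracket Y,F(x)\rrbracket = Y(F)+\llbracket Y,F(e)\rrbracket .$$
This identity comes from applying the lemma with $H(x)=\tilde Y_x(F)+\llbracket Y,F(x)\rrbracket$. The lemma gives $\tilde H=\llbracket\tilde Y,\tilde F\rrbracket$, and the normalizer condition says this bracket is left-invariant. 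A left-invariant field corresponds to a constant function, so $H(x)=H(e)$ for all $x$. Since we assume $\tilde F$ lies in the normalizer, this identity is available.

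The second step is to translate the terms into differentials. By definition, $\tilde Y_x(F)$ is the derivative of $F$ along the left-invariant field through $x$:
$$\tilde Y_x(F)=\frac{d}{dt}\Big|_{t=0}F(x\,\textnormal{exp}(tY)) = dF_x(\tilde Y_x).$$
At the identity we have $\tilde Y_e=Y$, so $Y(F)=dF_e(Y)$. Substituting both into the identity gives
$$dF_x(\tilde Y_x)=dF_e(Y)+\llbracket Y,F(e)\rrbracket-\llbracket Y,F(x)\rrbracket ,$$
and bilinearity of the superbracket combines the last two terms into $\llbracket Y,F(e)-F(x)\rrbracket$. This is exactly the claimed formula.

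As a consistency check, the same result follows by differentiating formula (\ref{chap4eqn2}) of the theorem with $Y$ replaced by $tY$ at $t=0$. The right-hand side has derivative $-\textnormal{ad}Y(F(x))+B(Y)$, which is the same expression.

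The only delicate point is the graded setting. Two things need checking: that no sign $(-1)^{|F||Y|}$ appears when $Y$ is odd, and that $dF_x(\tilde Y_x)$ is interpreted consistently with how $\tilde Y_x(F)$ was used in the lemma. I would handle both by adopting the paper's convention that $\tilde Y$ acts on $F$ from the left. Under that convention the identity above holds verbatim for homogeneous $Y$, and the general case follows by linearity in $Y$.
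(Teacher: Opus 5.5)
Your proof is correct and is essentially the paper's argument. The paper replaces $Y$ by $tY$ in (\ref{chap4eqn2}) and differentiates at $t=0$, which is exactly your consistency check. Since (\ref{chap4eqn2}) was itself obtained by integrating the identity $\tilde Y_x(F)+\llbracket Y,F(x)\rrbracket = Y(F)+\llbracket Y,F(e)\rrbracket$, reading that identity off directly simply skips the integrate-then-differentiate round trip.
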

  \begin{proof}
  This follows from \ref{chap4eqn2} by replacing $Y$ with $tY$ and differentiating it with respect to $t$ at $t=0$. 
  \end{proof}

  \begin{corollary}
  For a connected Lie supergroup $\mathcal{G}$, an element $\tilde{F} \in \textnormal{norm}_{Vec(\mathcal{G})}(\mathfrak{g})$ is determined by a pair $(F(e),dF_e)$.
  \end{corollary}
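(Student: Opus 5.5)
The plan is a uniqueness argument: if two elements $\tilde F,\tilde G\in \textnormal{norm}_{Vec(\mathcal{G})}(\mathfrak{g})$ satisfy $F(e)=G(e)$ and $dF_e=dG_e$, then $F=G$ on all of $\mathcal{G}$. Since the normalizer is a linear subspace and $F\mapsto\tilde F$ is linear, it suffices to take $K=F-G$. Then $\tilde K$ lies in the normalizer and satisfies $K(e)=0$ and $dK_e=0$, and the goal becomes $K\equiv 0$. Working with the difference also sidesteps the parity sign issues appearing in Lemma \ref{chap4lem3.1}: the relations below are linear in $F$, so I can apply them to the homogeneous components of $K$ separately.

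First I apply the preceding corollary to $K$. Because $K(e)=0$ and $dK_e=0$, it gives, for every $Y\in T_e\mathcal{G}$ and every $x\in\mathcal{G}$,
$$ dK_x(\tilde Y_x)=-\llbracket Y,K(x)\rrbracket . $$
Alternatively, equation \eqref{chap4eqn2} applied to $K$ has $B(Y)=Y(K)+\llbracket Y,K(e)\rrbracket=dK_e(Y)+0=0$. It therefore reduces to $K(x\,\textnormal{exp}\,Y)=e^{-\textnormal{ad}Y}K(x)$. Taking $x=e$ gives $K(\textnormal{exp}\,Y)=0$ for all $Y$, so $K$ vanishes on the image of the exponential map.

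Next I propagate this vanishing. Fix $x$ with $K(x)=0$. Along the curve $t\mapsto x\,\textnormal{exp}(tY)$, the function $G(t)=K(x\,\textnormal{exp}(tY))$ solves the linear ODE $G'=-\textnormal{ad}Y(G)$ with $G(0)=0$. By uniqueness, $G\equiv 0$. Hence the zero set $S=\{x: K(x)=0\}$ is invariant under right multiplication by $\textnormal{exp}(Y)$ for every $Y$. The set $S$ is closed by continuity and contains $e$. Since products of exponentials $\textnormal{exp}Y_1\cdots\textnormal{exp}Y_r$ generate the identity component, $S$ contains all of $\mathcal{G}$ by connectedness. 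Concretely, the set of such products is an open subgroup (it contains the neighborhood $\textnormal{exp}(U)$ of $e$), hence also closed, hence everything.

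The main obstacle is justifying this generation step in the super setting with the DeWitt topology. Openness is determined by the body map, so a neighborhood of $e$ is huge in the soul directions. I would argue via the body: $\epsilon$ maps $\mathcal{G}$ onto an ordinary connected Lie group $\mathcal{G}_B$. There, products of exponentials of even elements cover $\mathcal{G}_B$. The odd/soul directions are reached by exponentials $\textnormal{exp}Y$ with $Y$ ranging over all of $T_e\mathcal{G}$, including odd parts with Grassmann coefficients. So every $x$ can be written as $x_B$-lift times $\textnormal{exp}(Y)$ with $Y$ nilpotent-soul. This decomposition of $\mathcal{G}$ as (products of even exponentials)$\cdot\textnormal{exp}(\text{soul part})$ is the piece I would check most carefully, possibly invoking the standard structure of $H^\infty$ Lie supergroups from Rogers.
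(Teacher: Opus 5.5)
Your proposal is correct and is essentially the paper's argument: both rest on \eqref{chap4eqn2} at $x=e$, which fixes $F$ on the image of $\textnormal{exp}$ from $(F(e),dF_e)$, followed by connectedness. Your difference $K=F-G$, the general-$x$ identity $K(x\,\textnormal{exp}\,Y)=e^{-\textnormal{ad}Y}K(x)$ and the open-subgroup argument just make explicit the step the paper compresses into ``$U$ can induce the whole $\mathcal{G}$'', though the ODE-uniqueness step is redundant given that identity. The closedness of $S$ is never used, which is just as well, since zero sets need not be closed in the non-Hausdorff DeWitt topology.
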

  \begin{proof}
  Taking $x=e$ in Equation (\ref{chap4eqn2}), one can obtain
  $$	F( \textnormal{ exp}(Y))=(e^{-adY})F(e)+\left(  \dfrac{1-e^{-adY}}{adY}\right)  (  dF_e(Y) + \llbracket Y,F(e) \rrbracket).$$
  Thus, $\tilde{F}$ is determined by $(F(e),dF_e)$ on a neighbourhood $U$ of $e$ . Since, $\mathcal{G}$ is connected Lie supergroup, $U$ can induce the whole $\mathcal{G}$ and hence the result follows for all $x\in \mathcal{G}$.
  \end{proof}
  
  \begin{corollary}
  Let $\mathcal{G}=\mathbb{R}^{m\vert n}$. Then $\tilde{F} \in \textnormal{norm}_{Vec(\mathcal{G})}(\mathfrak{g})$ iff $X_p=A(p)+q$ for all $p\in \mathbb{R}^{m\vert n}$, where $A$ is an even linear operator on $\mathbb{R}^{m\vert n}$ and $q$ is some fixed vector on $\mathbb{R}^{m\vert n}$.
  \end{corollary}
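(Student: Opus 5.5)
The plan is to specialise the characterisation in Equation (\ref{chap4eqn2}) to the abelian Lie supergroup $\mathcal{G}=\mathbb{R}^{m\vert n}$, where the group law is addition and $\textnormal{exp}$ is the identity map $T_e\mathcal{G}=\mathbb{R}^{m\vert n}\to\mathcal{G}$. Left translation $L_x$ is then ordinary translation, so $dL_x$ is the identity under the canonical identification of tangent spaces. Hence the vector field $X=\tilde{F}$ satisfies $X_p=F(p)$ for all $p$, and the claim amounts to the statement that $F$ is affine with even linear part.

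For the forward direction, I will use that the Lie superbracket on $\mathfrak{g}=\mathbb{R}^{m\vert n}$ vanishes, so $\textnormal{ad}Y=0$. Then $e^{-\textnormal{ad}Y}=\textnormal{id}$, and the operator $\frac{1-e^{-\textnormal{ad}Y}}{\textnormal{ad}Y}$ must be read through its power series $\sum_{n\ge 1}\frac{(-\textnormal{ad}Y)^{n-1}}{n!}$, as in the proof of the preceding theorem; this series reduces to the identity. Equation (\ref{chap4eqn2}) therefore becomes
\[
F(x+Y)=F(x)+dF_e(Y)\qquad\text{for all } x,Y .
\]
Setting $x=e=0$, $A:=dF_e$ and $q:=F(0)$ gives $F(p)=A(p)+q$. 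Additivity of $A$ follows from the displayed identity. Linearity over the Grassmann scalars comes from $dF_e$ being a (super)differential. I will justify evenness of $A$ by using that $X$ is an even vector field, since the dynamics $\mathfrak{D}$ consist of even fields, so that its differential preserves parity. Alternatively, I will require $F$ to be grading-preserving, matching the remark at the start of Section \ref{sec3}.

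For the converse, take $X_p=A(p)+q$ and a constant (left-invariant) field $\tilde{Y}$. I will compute $\llbracket X,\tilde{Y}\rrbracket$ directly, or via Lemma \ref{chap4lem3.1}: the field $H(x)=\tilde{Y}_x(F)+\llbracket Y,F(x)\rrbracket=A(Y)+0$ is constant. Thus $\llbracket \tilde{Y},X\rrbracket\in\mathfrak{g}$, and so $X\in\textnormal{norm}_{Vec(\mathcal{G})}(\mathfrak{g})$. Equivalently, I can check that the affine $F$ satisfies Equation (\ref{chap4eqn2}) with $\textnormal{ad}=0$.

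I expect the main obstacle to be careful bookkeeping rather than the formal argument. First, the expression $\frac{1-e^{-\textnormal{ad}Y}}{\textnormal{ad}Y}$ at $\textnormal{ad}Y=0$ must be interpreted as its series. Second, the sign $(-1)^{\vert F\vert}$ from Lemma \ref{chap4lem3.1} for odd $Y$ must be handled; it disappears because all brackets vanish and $A$ is even. Third, linearity of $A$ must hold over the Grassmann algebra $\mathbb{R}_S$ and not just over $\mathbb{R}$. For this I would invoke the $H^\infty$ structure: an additive $H^\infty$ map is determined by its body derivative, and is therefore $\mathbb{R}_S$-linear.
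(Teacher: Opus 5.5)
Your proposal is correct and follows the paper's route: specialize Equation~(\ref{chap4eqn2}) to the abelian group $\mathbb{R}^{m\vert n}$, where $\textnormal{exp}$ is the identity and $\textnormal{ad}\,Y=0$, then set $x=0$, $Y=p$ to get $F(p)=dF_0(p)+F(0)$ with $A=dF_0$ and $q=F(0)$; since $A$ is a differential it is already linear, so your separate additivity and $H^\infty$ arguments are unnecessary. Your plan also handles the converse and the evenness of $A$, which the paper's one-line proof skips, and you are right that evenness requires assuming $\tilde{F}$ is even: the odd field $x\partial_\theta$ on $\mathbb{R}^{1\vert 1}$ lies in the normalizer but has an odd linear part.
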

  \begin{proof}
  The tangent space of $\mathbb{R}^{m\vert n}$ at a fixed point, is isomorphic to $\mathbb{R}^{m\vert n}$ itself. Thus, the vector field $\tilde{F}$ remains same as the map $F\in H^\infty (\mathbb{R}^{m\vert n},\mathbb{R}^{m\vert n})$. Now, taking $x=0$ and $Y=p$  in Equation (\ref{chap4eqn2}), one obtains $F(p)=dF_0(p)+F(0)$ and hence the proof is done.
  \end{proof}
  \vspace{0.5cm}
  Thus, it is shown that taking the drift vector field from the normalizer of the Lie supergroup with $X_e=0$ obtains the linear vector field in the superspace. Now, the next part shows that it also generates the required flow as a linear vector field.\\

  Let $Diff(\mathcal{G})$ denote the supergroup of all diffeomorphisms of $\mathcal{G}$.  Let $\beta : \mathcal{G} \otimes \textnormal{Aut}(\mathcal{G}) \mapsto Diff(\mathcal{G}) $ be a map defined by $\beta (x,\Gamma)=R_{x^{-1}}\cdot \Gamma$ such that the following diagram commutes,
  
  \begin{center}
  \begin{tikzpicture}[>=latex]
  	\node (x) at (0,0) {\( \textnormal{norm}_{Vec(\mathcal{G})}(\mathfrak{g}) \)};
  	\node (z) at (0,-2.25) {\( Diff(\mathcal{G}) \)};
  	\node (y) at (6,0) {\( \mathfrak{g} \otimes \mathfrak{aut}(\mathfrak{g}) \)};
  	\node (w) at (6,-2.25) {\( \mathcal{G} \otimes \textnormal{Aut} (\mathcal{G})  \)};
  	\draw[->] (x) -- (y) node[midway,above] {};
  	\draw[->] (x) -- (z) node[midway,left] {$Exp$};
  	\draw[->] (w) -- (z) node[midway,above] {};
  	\draw[->] (y) -- (w) node[midway,right] {$Exp$};
  \end{tikzpicture}
  \end{center}
  
  \begin{lemma}
  The map $\alpha:\mathcal{G} \times \textnormal{Aut}(\mathcal{G}) \to Diff(\mathcal{G})$  is an injective supergroup homomorphism.
  \end{lemma}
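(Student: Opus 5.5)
The plan is to read $\alpha$ as the map $\beta$ defined just above: $\alpha(x,\Gamma)=R_{x^{-1}}\circ\Gamma$. The domain $\mathcal{G}\times\textnormal{Aut}(\mathcal{G})$ gets the semidirect product structure
$$(x,\Gamma)\cdot(y,\Psi)=(x\,\Gamma(y),\;\Gamma\circ\Psi),$$
with $\textnormal{Aut}(\mathcal{G})$ acting on $\mathcal{G}$ in the natural way. The direct product would make $\alpha$ fail to be a homomorphism, because right translations and automorphisms do not commute. The proof then has three steps: check that $\alpha$ is well-defined and smooth, verify the homomorphism identity, and show that the kernel is trivial.

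\textbf{Well-definedness.} Each $\Gamma\in\textnormal{Aut}(\mathcal{G})$ is an $H^\infty$ diffeomorphism, and so is each right translation $R_{x^{-1}}$. Hence $\alpha(x,\Gamma)\in Diff(\mathcal{G})$. Smooth dependence on $(x,\Gamma)$ comes from the smoothness of the group multiplication $\mathcal{G}\times\mathcal{G}\to\mathcal{G}$ and of the action of $\textnormal{Aut}(\mathcal{G})$.

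\textbf{Homomorphism identity.} For $p\in\mathcal{G}$ we compute
$$\alpha(x,\Gamma)\alpha(y,\Psi)(p)=\Gamma\big(\Psi(p)\,y^{-1}\big)x^{-1}=\Gamma\Psi(p)\,\Gamma(y)^{-1}x^{-1}.$$
Since $\Gamma(y)^{-1}x^{-1}=(x\Gamma(y))^{-1}$, the right-hand side equals $R_{(x\Gamma(y))^{-1}}\circ\Gamma\Psi(p)=\alpha\big((x,\Gamma)(y,\Psi)\big)(p)$.

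The only super subtlety is sign bookkeeping when these points have odd Grassmann components. I plan to handle it by working with $\mathbb{R}_S$-points, in the concrete DeWitt/Rogers setting the paper uses. In that setting $\mathcal{G}$ is an honest group, and the identity above is purely group-theoretic with no signs, because automorphisms are even maps.

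\textbf{Injectivity.} Suppose $R_{x^{-1}}\circ\Gamma=\textnormal{id}$. Evaluating at $e$ and using $\Gamma(e)=e$ gives $x^{-1}=e$, so $x=e$. Then $\Gamma=\textnormal{id}$, and the kernel is trivial.

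I expect the main obstacle to be the structural point rather than any computation. The statement only holds once the correct semidirect product structure is fixed on $\mathcal{G}\times\textnormal{Aut}(\mathcal{G})$, and we need $\textnormal{Aut}(\mathcal{G})$ to carry a supergroup structure for which the action is smooth. I will take this from the simply connected identification $\textnormal{Aut}(\mathcal{G})\cong\textnormal{Aut}(T_e\mathcal{G})$, which is a closed subsupergroup of $GL(T_e\mathcal{G})$. I will also check that this choice is compatible with the commuting diagram: at the infinitesimal level, the Lie superalgebra of the semidirect product is $\mathfrak{g}\rtimes\mathfrak{aut}(\mathfrak{g})$, and its bracket should match the bracket transported through $\Phi$ from Theorem \ref{thmnorm}.
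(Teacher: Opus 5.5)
Your proposal is correct and follows essentially the same route as the paper. The paper also uses (implicitly) the semidirect-product multiplication $(x,\Gamma)(y,\Psi)=(x\Gamma(y),\Gamma\Psi)$, verifies the homomorphism property via the identity $R_{(x\Gamma(y))^{-1}}=R_{x^{-1}}(\Gamma R_{y^{-1}}\Gamma^{-1})$, which is the operator form of your pointwise computation, and proves injectivity exactly as you do by evaluating at $e$; your extra remarks on smoothness and on the supergroup structure of $\textnormal{Aut}(\mathcal{G})$ go beyond what the paper addresses.
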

  
  \begin{proof}
  Let us consider
  \begin{align*}
  	\beta ((x,\Gamma)(y,\Psi)) &= \beta(x\Gamma(y),\Gamma \Psi)\\
  	&=R_{(x\Gamma(y))^{-1}} \Gamma \Psi \\
  	&= R_{x^{-1}} (\Gamma R_{y^{-1}}\Gamma^{-1})\Gamma \Psi \\
  	&= \beta (x,\Gamma) \beta (y,\Psi).
  \end{align*}
  The penultimate step is due to 
  $$ R_{(x\Gamma(y))^{-1}} = R_{x^{-1}} R_{\Gamma(y)^{-1}} = R_{x^{-1}} (\Gamma R_{y^{-1}}\Gamma^{-1}). $$
  Hence, the homomorphism is proved. 
  Now to show injective, consider $\beta(x,\Gamma)=I$, which implies $e=R_{x^{-1}}\Gamma(e)=x^{-1}$ and hence, $I=\beta(e,\Gamma)=\Gamma$. 
  \end{proof}

  \begin{theorem}
  The  $(1\vert 0)$-parametric group of $\tilde{F}_t$ of diffeomorphisms associated to $\tilde{F} \in \textnormal{norm}_{Vec(\mathcal{G})}(\mathfrak{g})$ is given by
  $$ \tilde{F}_t=\textnormal{Exp}(-t\tilde{F}). $$
  \end{theorem}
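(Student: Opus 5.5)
The plan is to interpret $\textnormal{Exp}(-t\tilde{F})$ through the commutative diagram above. Under $\Phi$, $\tilde{F}$ corresponds to the pair $(-F(e),D)$ with $D=\textnormal{ad}(F(e))-dF_e$ (Lemma \ref{chap4lem3.1}). Exponentiating this pair in $\mathcal{G}\times\textnormal{Aut}(\mathcal{G})$ and pushing the result through the injective homomorphism $\beta$ gives a candidate family of diffeomorphisms. Since $\beta$ is a supergroup homomorphism, the candidate $t\mapsto\beta(\textnormal{Exp}(-t(-F(e),D)))$ is automatically a $(1\vert 0)$-parameter subgroup of $Diff(\mathcal{G})$. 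It therefore suffices to show that its infinitesimal generator is $\tilde{F}$. By uniqueness of integral curves (the same argument used in the lemma on the centralizer), the candidate then coincides with the flow $\tilde{F}_t$.

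The key steps are as follows.

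First, use the decomposition $\tilde{F}=Z+X$ with $Z\in\mathfrak{z}(\mathfrak{g})$ right-invariant and $X\in\mathscr{H}$. These two pieces are handled separately.

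Second, treat the right-invariant piece. Its flow is left multiplication, $p\mapsto \textnormal{exp}(tZ_e)p$. Its image under the identification with $\mathfrak{g}$ is $Z'=-Z_e=-F(e)$. This matches the first factor, which through $\beta$ acts by $R$ or $L$ translations.

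Third, treat $X$. Since $X_e=0$ and $\textnormal{ad}(X)_{\mathfrak{g}}=-D'$ for a superderivation $D'$, the surjectivity argument in Theorem \ref{thmnorm} shows that the flow of $X$ is the one-parameter group of automorphisms $\phi_t$ with $(d\phi_t)_e=\textnormal{exp}(tD')$, that is, $\phi_t=\textnormal{Exp}(tD')$. The sign conventions must then be matched with $-t\tilde{F}$.

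Fourth, combine the two flows. Because $\llbracket Z,X\rrbracket\in\mathfrak{z}(\mathfrak{g})$ (the centralizer is an ideal), the flows do not simply commute. Instead, the group law $(x,\Gamma)(y,\Psi)=(x\Gamma(y),\Gamma\Psi)$ in the semidirect product is exactly what records the twisting. Differentiating $\beta(x_t,\Gamma_t)=R_{x_t^{-1}}\circ\Gamma_t$ at $t=0$ via the product rule gives $-\,$(right-invariant field of $\dot x_0$) plus the linear field generated by $\dot\Gamma_0$. This is precisely the splitting $\tilde{F}=Z+X$ up to the overall sign, which produces $\textnormal{Exp}(-t\tilde{F})$.

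The main obstacle is this last differentiation together with the sign bookkeeping. Several conventions interact: $Z_e=-Z'_e$, the $(-1)^{\vert F\vert}$ grading factors in Lemma \ref{chap4lem3.1}, and the use of $R_{x^{-1}}$ in $\beta$. They must align so that the generator is $\tilde{F}$ and not $-\tilde{F}$ or a mixed-sign combination.

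My approach would be to check the identity first at $e$, using $\tilde{F}_e=F(e)$ and $\beta(x,\Gamma)(e)=x^{-1}$. I would then check its $\textnormal{ad}$-action on $\mathfrak{g}$. By the corollary stating that an element of the normalizer is determined by the pair $(F(e),dF_e)$, agreement at these two levels forces equality of the vector fields. Since the flow is even and parametrized by $\mathbb{R}^{1\vert 0}$, only even $\tilde{F}$ need be considered, so the odd-$Y$ case of Lemma \ref{chap4lem3.1} reduces to the even case here.
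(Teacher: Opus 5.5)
Your reduction differs from the paper's route. The paper computes the generator $H$ of $\textnormal{Exp}(-t\tilde F)$ at points $\exp W$ and matches it against Equation (\ref{chap4eqn2}). You instead identify the generator by its value at $e$ and its $\textnormal{ad}$-action on $\mathfrak{g}$, then use injectivity of $\Phi$. That reduction is sound, but the differentiation in your Step 4 is wrong.

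With the paper's $\beta(x,\Gamma)=R_{x^{-1}}\circ\Gamma$, write $\Phi(\tilde F)=(E,\phi)=(-F(e),\textnormal{ad}(X)_{\mathfrak g})$ and $\textnormal{Exp}(-t(E,\phi))=(g(t),\Gamma(t))$, so that $\dot g(0)=F(e)$. Then
\[
\frac{d}{dt}\Big|_{t=0}\Gamma(t)(p)\,g(t)^{-1}=\frac{d}{dt}\Big|_{t=0}\Gamma(t)(p)-(dL_p)_e\big(F(e)\big).
\]
The field $p\mapsto (dL_p)_e(F(e))$ is \emph{left}-invariant. Right translations are the flows of left-invariant fields, so they can never produce the right-invariant part $Z$, whose flow you correctly note is $p\mapsto \exp(tZ_e)p$.

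Your own proposed check at $e$ exposes this: $\frac{d}{dt}|_{t=0}g(t)^{-1}=-F(e)=-\tilde F_e$. So the sign issue you defer is not bookkeeping. With $\beta$ as defined, the identity is false whenever $F(e)\neq 0$. For a right-invariant $\tilde F=Z$ with $Z_e=v$, one has $\Phi(Z)=(-v,0)$ and $\beta(\textnormal{Exp}(-t\Phi(Z)))=R_{\exp(-tv)}$, while the flow of $Z$ is $L_{\exp(tv)}$. Already on $\mathbb{R}^{m\vert n}$ this gives $p\mapsto p-tv$ against $p\mapsto p+tv$.

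The fix is to take $\beta(x,\Gamma)=L_x\circ\Gamma$. This is also an injective homomorphism for the law $(x,\Gamma)(y,\Psi)=(x\Gamma(y),\Gamma\Psi)$, since $L_x\Gamma L_y\Psi=L_{x\Gamma(y)}\Gamma\Psi$. Then
\[
\frac{d}{dt}\Big|_{t=0}g(t)\,\Gamma(t)(p)=(dR_p)_e\big(F(e)\big)+\frac{d}{dt}\Big|_{t=0}\Gamma(t)(p).
\]
The first term is exactly $Z_p$, because $Z_e=\tilde F_e=F(e)$. The second term is the generator of the automorphism flow $\textnormal{Exp}(-t\phi)$. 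It vanishes at $e$ and, by the computation in the proof of Theorem \ref{thmnorm}, has $\textnormal{ad}$-action $\phi=\textnormal{ad}(X)_{\mathfrak g}$. Hence it equals $X$, because $\mathfrak{z}(\mathfrak{g})\cap\mathscr{H}=\{0\}$. Your comparison of values at $e$ and $\textnormal{ad}$-actions then finishes the proof, and it is cleaner than matching series.

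Do not rely on the paper's proof to settle the sign. It reaches agreement with Equation (\ref{chap4eqn2}) only by applying $\frac{1-e^{-\textnormal{ad}W}}{\textnormal{ad}W}$ to $E$ as well. The correct left-trivialized contribution of $x\,g(t)^{-1}$ at $x=\exp W$ is just $E=-F(e)$, which does not match the term $e^{-\textnormal{ad}W}F(e)$ there. With $L_x\circ\Gamma$ the contribution becomes $(dR_x)_e(F(e))=(dL_x)_e\big(e^{-\textnormal{ad}W}F(e)\big)$, and the match is exact.
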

  
  \begin{proof}
  Since $Exp(-t\tilde{F})$ is a $(1\vert 0)$-parameter subgroup of diffeomorphism of $\mathcal{G}$, it is enough to find the associated vector field $\tilde{H}$. Let $x(t)=Exp(-t\tilde{F})(x)$, $\Phi (\tilde{F})=(E,\phi)$ and $Exp(-t(E, \phi))=(g(t), \Gamma(t))$, then
  \begin{equation}\label{eqn6}
  	x(t)=\textnormal{Exp}(-t(E,\phi))(x)=(R_{g(t)^{-1}} \Gamma(t))(x)=(\Gamma(t)(x))g(t)^{-1} .
  \end{equation}
  
  Let $\mu : \mathcal{G} \times \textnormal{Aut}(\mathcal{G}) \mapsto \mathcal{G}$ be defined by $\mu(x,\Gamma)=\Gamma(x)$, then
  \begin{align*}
  	\frac{d}{dt} \biggr\rvert_{t=0} \Gamma(t)(x) &=d\mu_{(x,I)} (0,\dot{\Gamma}(0))\\
  	&=d\mu_{(x,I)} (0,-\phi)\\
  	&= 	\frac{d}{dt} \biggr\rvert_{t=0} \mu(x,\textnormal{Exp}(-t\phi))\\
  	&= 	\frac{d}{dt} \biggr\rvert_{t=0} (\textnormal{Exp}(-t\phi))(exp(X))\\
  	&= 	\frac{d}{dt} \biggr\rvert_{t=0} exp(e^{-t\phi}(X))\\
  	&= 	\frac{d}{dt} \biggr\rvert_{t=0} exp_X (-\phi(X)).
  \end{align*}
  Now using Equation (\ref{eqn6}), one may obtain
  $$ \dot{x}(0) = \frac{d}{dt} \biggr\rvert_{t=0} \Gamma(t)(x) +(dL_x)(E) =(dL_x)_e \left(  \dfrac{1-e^{-adX}}{adX}\right) (-\phi(X)+E) .$$
  Then the map $H\in H^\infty (\mathcal{G}, T_e(\mathcal{G}))$ corresponding to $\tilde{H}$ satisfies
  $$ H(exp(X))=\left(  \dfrac{1-e^{-adX}}{adX}\right) (-\phi(X)+E), $$
  which is in the required form of Equation (\ref{chap4eqn2}), Hence, $H(x)=F(x),~\forall x\in \mathcal{G}$ completes the proof.
  \end{proof}
  
  In the next section, the controllability of a linear control system is given in details.

  
  \subsection{Controllability}\label{sec4.2}
  
  This section is devoted to establishing some controllability results on the linear systems having the drift vector field $X$ which is an infinitesimal automorphism of $\mathcal{G}$ i.e., the flow $\{ \phi_t^X \vert ~t \in \mathbb{R}^{1 \vert 0 } \}$ generated by $X \in \mathfrak{aut}(\mathcal{G})$ is a $(1\vert 0)$ parametric subgroup of automorphism $\textnormal{Aut}(\mathcal{G})$ of $\mathcal{G}$. The control vector fields $Y^i $ are left-invariant vector fields on $\mathcal{G}$ and  let the family of flows generated by drift vector field $X$ be
  $$\mathcal{T} =\{ \phi_t^X \vert t\in \mathbb{R}^{1\vert 0} \}\subset \textnormal{Aut}(\mathcal{G}).\\$$

  \noindent Here, we obtain two important structures, one is  the group  
  $$G_{\Sigma} := \{ \phi_{t_1}^{Z^1} \circ \phi_{t_2}^{Z^2} \circ \cdots \circ \phi_{t_r}^{Z^r}  \vert~ Z^i \in \mathcal{D}, ~ t_i \in \mathbb{R}^{1 \vert 0},~ (t_i)_{B} \in \mathbb{R} \} $$
  and the other is the semigroup
  $$S_{\Sigma}:= \{  \phi_{t_1}^{Z^1} \circ \phi_{t_2}^{Z^2} \circ \cdots \circ \phi_{t_r}^{Z^r}  \vert ~Z^i \in \mathcal{D},~ t_i \in \mathbb{R}^{1 \vert 0},~ (t_i)_{B} \geq 0 \} $$
  of diffeomorphisms on $\mathcal{G}$, where $(t_i)_B$ is the body part of $t_i$ and $~ r \in \mathbb{N}$.
  
  \begin{definition}
  If the group $G_{\Sigma} $ acts transitively on the state space $\mathcal{G}$, then the system $\Sigma$ is said to be transitive.
  \end{definition}
  
  \noindent The group $G_{\Sigma} $ acting on $x\in \mathcal{G}$ gives the orbit through $x$ and the importance of the semi group structure is that it forms the reachable set from $x\in \mathcal{G}$.

  \begin{definition}
  The system $\Sigma$ is said to be completely controllable if the reachable set covers the whole state space i.e.,
  $$ S_{\Sigma}(x) =\mathcal{G}, ~\forall x\in \mathcal{G} $$
  \end{definition}

  \noindent The $\Sigma$-orbits $G_{\Sigma}(x),~ x\in \mathcal{G}$ are the integral supermanifolds of the distribution generated by $sLie(\Sigma)$. Analogous to the ordinary case, there exists a one-one correspondence between connected Lie subsupergroups of $\mathcal{G}$ and Lie subsuperalgebras of $\mathfrak{g}$ (and hence with the Lie supermodule $\mathfrak{g}$) through the exponential map.   Let $\mathfrak{h}$ be the Lie subsuperalgebra of $\mathfrak{g}$ generated by the control vectors $C(\Sigma)=\{ Y^1,\cdots,Y^{k}, \Xi^1,\cdots, \Xi^l \}$ of $\Sigma$ i.e.,
  $$ \mathfrak{h}=\textnormal{Span}_{L.S.A.} (C(\Sigma)).$$

  \begin{lemma}\label{ad-invariant}
  If $X$ is an infinitesimal automorphism, then $\mathfrak{g}$ is $\textnormal{ad}^i(X)$-invariant for each $i \geq 1$.
  \end{lemma}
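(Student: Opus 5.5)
The plan is to reduce everything to the case $i=1$ and then induct. Since $X$ is an infinitesimal automorphism, its flow $\{\phi_t^X\}$ is a $(1\vert 0)$-parameter subgroup of $\textnormal{Aut}(\mathcal{G})$. Each automorphism $\phi_t^X$ maps left-invariant vector fields to left-invariant vector fields. Indeed, $\phi_t^X\circ L_g = L_{\phi_t^X(g)}\circ \phi_t^X$. So for $Y\in\mathfrak{g}$ the push-forward $(\phi_t^X)_*Y$ is again left-invariant, with value $(d\phi_t^X)_e Y_e$ at $e$. Hence $t\mapsto (\phi_{-t}^X)_*Y$ is a curve in the finite-dimensional super vector space $\mathfrak{g}$.

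Differentiating this curve at $t=0$ gives $\llbracket X,Y\rrbracket$, up to the sign convention already used in the proof of Theorem \ref{thmnorm}. Because $\mathfrak{g}$ is a closed subspace, the derivative lies in $\mathfrak{g}$. This is exactly the computation in part 2 of Theorem \ref{thmnorm}:
\[
\llbracket X,Y\rrbracket_p=\frac{d}{dt}\Big\rvert_{t=0} dL_p\, e^{-tD}Y_e=-(DY)_p,
\]
where $D=(d\,\cdot)_e$ of the generator. Thus $\textnormal{ad}(X)$ maps $\mathfrak{g}$ into itself, i.e.\ $X\in\textnormal{norm}_{Vec(\mathcal{G})}(\mathfrak{g})$ and $\textnormal{ad}(X)_{\mathfrak{g}}=-D\in\textnormal{Sder}(\mathfrak{g})$.

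Alternatively, I could cite the characterization of the normalizer directly. By the theorem on the flow $\tilde F_t=\textnormal{Exp}(-t\tilde F)$ and the discussion identifying linear vector fields with elements $\tilde F$ of the normalizer having $F(e)=0$, an infinitesimal automorphism is such an $\tilde F$. Lemma \ref{chap4lem3.1} then gives $\llbracket \tilde F,\tilde Y\rrbracket\in\mathfrak{g}$ for every $Y$.

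With $i=1$ in hand, I would finish by induction. Assume $\textnormal{ad}^{i}(X)(\mathfrak{g})\subseteq\mathfrak{g}$. For $Y\in\mathfrak{g}$, the element $\textnormal{ad}^{i+1}(X)Y=\textnormal{ad}(X)\big(\textnormal{ad}^i(X)Y\big)$ is $\textnormal{ad}(X)$ applied to an element of $\mathfrak{g}$, so it lies in $\mathfrak{g}$ by the base case. Parity causes no trouble: $X$ is even, since its flow is a $(1\vert0)$-parameter family. So $\textnormal{ad}(X)$ is an even map and preserves the grading of $\mathfrak{g}$ at each step. Linearity over the Grassmann coefficients handles odd elements $Y$ (the $\Xi^j$-type directions) exactly as it handles even ones.

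The main obstacle is the base case, specifically justifying that a super automorphism's flow pushes left-invariant fields to left-invariant fields. This is needed for odd directions as well, where flows are only formal in the anticommuting parameters. I would handle it with the identity $\phi\circ L_g=L_{\phi(g)}\circ\phi$ for $\phi\in\textnormal{Aut}(\mathcal{G})$, applied at the level of derivations on functions. That way only even parameters $t\in\mathbb{R}^{1\vert0}$ are ever exponentiated, and odd $Y$ enter linearly.
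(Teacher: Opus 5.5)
Your main argument is correct and is essentially the paper's proof. Both reduce to $i=1$, write $\llbracket X,Y\rrbracket$ as the $t$-derivative of $\phi_{-t}^X\circ\psi_s^Y\circ\phi_t^X$ (equivalently of $(\phi_{-t}^X)_*Y$), and get left-invariance from $\phi_t^X\in\textnormal{Aut}(\mathcal{G})$; your identity $\phi\circ L_g=L_{\phi(g)}\circ\phi$ is a cleaner way to see this than the paper's comparison of $\llbracket X,Y\rrbracket(x)$ with $\llbracket X,Y\rrbracket(e)$.

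Drop the ``alternative'' route, though. Lemma~\ref{chap4lem3.1} and the flow theorem $\tilde F_t=\textnormal{Exp}(-t\tilde F)$ both start from $\tilde F\in\textnormal{norm}_{Vec(\mathcal{G})}(\mathfrak{g})$, so invoking them to place an infinitesimal automorphism in the normalizer assumes what you are proving.
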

  \begin{proof}
  It is enough to show for the case $i=1$. Let $\phi_t^X,~\psi_s^Y$ are the flows of $X,~Y$ respectively where $Y\in \mathfrak{g}$ and use the expression for Lie superbracket 
  \begin{align*}
  	\llbracket X,Y \rrbracket (x) &= \dfrac{\partial}{\partial t} \biggr\rvert_{t=0} \dfrac{\partial}{\partial s} \biggr\rvert_{s=0} \phi_{-t}^X \circ \psi_s^Y \circ \phi_t^X (x)\\
  	&= \dfrac{\partial}{\partial t} \biggr\rvert_{t=0} \dfrac{\partial}{\partial s} \biggr\rvert_{s=0} \phi_{-t}^X (\phi_t^X (x) \textnormal{exp}(sY)) \\
  	&= - \dfrac{\partial}{\partial s} \biggr\rvert_{s=0} X_{x~\textnormal{exp}(sY)} \\
  	&= - \dfrac{\partial}{\partial s} \biggr\rvert_{s=0} (L_x)_* X_{\textnormal{exp}(sY)}
  \end{align*}
  On the other hand, since $X$ is infinitesimal automorphism, $\phi_t^X(e)=e$ for all $t\in \mathbb{R}^{1\vert 0}$ which results the following,
  \begin{align*}
  	\llbracket X,Y \rrbracket (e) &= \dfrac{\partial}{\partial t} \biggr\rvert_{t=0} \dfrac{\partial}{\partial s} \biggr\rvert_{s=0} \phi_{-t}^X \circ \psi_s^Y(e)\\
  	&= \dfrac{\partial}{\partial t} \biggr\rvert_{t=0} \dfrac{\partial}{\partial s} \biggr\rvert_{s=0} \phi_{-t}^X (\textnormal{exp}(sY))\\
  	&=  -\dfrac{\partial}{\partial s} \biggr\rvert_{s=0} X_{\textnormal{exp}(sY)}
  \end{align*}
  Therefore, we obtain
  $$ \llbracket X,Y \rrbracket (x) = (L_x)_{\ast} \llbracket X,Y \rrbracket (e) $$
  Thus, $\llbracket X,Y \rrbracket$ is a left-invariant vector field in $Vec(\mathcal{G})$i.e., $\llbracket X,Y \rrbracket \in \mathfrak{g}$.  
  \end{proof}

  Now $\langle X \vert \mathfrak{h} \rangle$ is denoted as the smallest ad($X$)-invariant subsuperalgebra of $\mathfrak{g}$ containing $\mathfrak{h}$ whereas $\mathcal{H}$ and $\langle X \vert \mathcal{H} \rangle$ are denoted as the connected Lie supergroup of $\mathcal{G}$ with Lie superalgebras $\mathfrak{h}$ and $\langle X \vert \mathfrak{h} \rangle$ respectively.
  Now, construct the following sequence.
  \begin{align*}
  \mathfrak{h}_0&=\mathfrak{h}\\
  \mathfrak{h}_i &=\mathfrak{h}_{i-1}+\textnormal{ad}^i(X)(\mathfrak{h}), ~ ~ i \in \mathbb{N},\\
  \textnormal{where } \textnormal{ad}^i(X)(\mathfrak{h})&=\{ \llbracket X,\textnormal{ad}^{i-1} (X)(V) \rrbracket \vert V\in \mathfrak{h} \}.
  \end{align*}
  For the finite-dimensional case, we end up having a positive integer $p$ for which the induction terminates and hence, by Lemma \ref{ad-invariant}, it is obtained to be a subsuperalgebra of $\mathfrak{g}$ which also contains $\mathfrak{h}$. Therefore, one can conclude that 
  $$ \mathfrak{h}_p= \langle X \vert \mathfrak{h} \rangle .$$
  
  \begin{proposition}\label{prop1}
  If $~\Sigma=(\mathcal{G}, \mathfrak{D})$ is a linear control system of the form (\ref{linlie}), then 
  $$ \phi_t^X \in \textnormal{Aut} (\langle X \vert \mathcal{H} \rangle), \qquad t\in \mathbb{R}^{1\vert 0}. $$
  \end{proposition}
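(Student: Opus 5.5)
We show that each automorphism $\phi_t^X$ of $\mathcal{G}$ maps the connected Lie subsupergroup $\langle X \vert \mathcal{H} \rangle$ onto itself. Its restriction is then a Lie supergroup automorphism of $\langle X \vert \mathcal{H} \rangle$, with inverse the restriction of $\phi_{-t}^X$.

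\textbf{Step 1: the differential of the flow.} Since $X$ is an infinitesimal automorphism, $\phi_t^X \in \textnormal{Aut}(\mathcal{G})$ and $\phi_t^X(e)=e$. Let $D=-\textnormal{ad}(X)_{\mathfrak{g}}$, which is a superderivation of $\mathfrak{g}$ by Lemma \ref{ad-invariant}. Arguing as in the proof of Theorem \ref{thmnorm}(2), differentiate the relation
$$(d\phi_t^X)_e\,\textnormal{Ad}\text{-type transport}\quad\text{i.e.}\quad (d\phi_{t}^X)_e Y=\big((\phi_{-t}^X)_*\tilde Y\big)_e$$
in $t$. This gives the linear ODE $\frac{d}{dt}(d\phi_t^X)_e = D\circ(d\phi_t^X)_e$ (up to the sign convention already fixed there), with initial value $(d\phi_0^X)_e=\textnormal{id}$. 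Hence $(d\phi_t^X)_e=e^{tD}$ for all $t\in\mathbb{R}^{1\vert 0}$.

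\textbf{Step 2: invariance of the Lie superalgebra.} The subsuperalgebra $\langle X\vert\mathfrak{h}\rangle=\mathfrak{h}_p$ is $\textnormal{ad}(X)$-invariant by construction, and it is finite dimensional. Hence it is invariant under every power $D^n$. The exponential series therefore gives $e^{tD}(\langle X\vert\mathfrak{h}\rangle)\subseteq\langle X\vert\mathfrak{h}\rangle$. Applying the same to $-t$ yields equality. Since $e^{tD}$ is an automorphism of $\mathfrak{g}$, its restriction is an automorphism of $\langle X\vert\mathfrak{h}\rangle$.

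\textbf{Step 3: passing to the subsupergroup.} Let $\mathcal{K}=\langle X \vert \mathcal{H} \rangle$. Because $\mathcal{K}$ is connected, it is generated by $\exp(\langle X\vert\mathfrak{h}\rangle)$. By the relation $(\textnormal{Exp}\,D)(\exp Y)=\exp(e^{D}Y)$ recorded in the Linear Vector field subsection, or more directly by naturality of $\exp$ under Lie supergroup homomorphisms, we get
$$\phi_t^X(\exp Y)=\exp\big((d\phi_t^X)_eY\big)=\exp(e^{tD}Y)\in\mathcal{K}, \qquad Y\in\langle X\vert\mathfrak{h}\rangle.$$
Since $\phi_t^X$ is a homomorphism, it maps products of such exponentials into $\mathcal{K}$, so $\phi_t^X(\mathcal{K})\subseteq\mathcal{K}$. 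The same holds for $\phi_{-t}^X=(\phi_t^X)^{-1}$, so $\phi_t^X(\mathcal{K})=\mathcal{K}$. Smoothness of the restriction follows because $\mathcal{K}$ is an integral supermanifold of the left-invariant distribution defined by $\langle X\vert\mathfrak{h}\rangle$, and $\phi_t^X$ preserves this distribution by Step 2.

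\textbf{Main obstacle.} I expect Step 1 to be the hardest part: identifying $(d\phi_t^X)_e$ with $e^{-t\,\textnormal{ad}(X)}$ with correct signs in the graded setting. Here $X$ is even, so no parity signs appear. The cleanest route is to invoke the correspondence $\textnormal{Aut}(\mathcal{G})\to\textnormal{Aut}(T_e\mathcal{G})$ together with uniqueness of solutions of linear ODEs. A secondary point is the restriction-smoothness claim for a possibly non-closed subsupergroup, which the integral-manifold argument should handle.
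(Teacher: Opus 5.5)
Your argument is correct, but it takes a different route from the paper's. The paper argues purely through the foliation. It takes the left-invariant distribution $\Delta(x)=(L_x)_*\langle X\vert\mathfrak{h}\rangle$ and asserts that $\textnormal{ad}(X)$-invariance makes the flow permute its leaves, $\phi_t^X(I_\Delta(x))=I_\Delta(\phi_t^X(x))$. It then identifies the leaves with the cosets $x\langle X\vert\mathcal{H}\rangle$ and specializes to $x=e$. You instead compute $(d\phi_t^X)_e=e^{-t\,\textnormal{ad}X}$ and note that it preserves $\langle X\vert\mathfrak{h}\rangle$. You then transfer this to $\mathcal{K}=\langle X\vert\mathcal{H}\rangle$ using naturality of $\exp$, the homomorphism property of $\phi_t^X$, and generation of $\mathcal{K}$ by exponentials.

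Your route makes explicit the infinitesimal-to-global step that the paper only asserts. The identity $d\phi_t^X\big((L_x)_*V\big)=(L_{\phi_t^X(x)})_*(d\phi_t^X)_eV$, combined with your Step 2, is exactly why the flow preserves $\Delta$. The paper's route needs no exponential map and gives the coset statement $\phi_t^X(x\mathcal{K})=\phi_t^X(x)\mathcal{K}$ at once. Its leaf picture is also what supplies regularity of the restriction, which you end up invoking anyway in the last sentence of Step 3.

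Three small corrections:
\begin{itemize}
\item The transport identity should be $(d\phi_t^X)_eY=\big((\phi_t^X)_*\tilde Y\big)_e$. With $(\phi_{-t}^X)_*$ you would obtain $e^{+t\,\textnormal{ad}X}$. This is harmless for the conclusion, since an $\textnormal{ad}(X)$-invariant subspace is invariant under $e^{\pm t\,\textnormal{ad}X}$.
\item Since $\mathcal{G}$ is only assumed connected here, rely on naturality of $\exp$ under homomorphisms, not on the $\textnormal{Exp}$ formula, which the paper states for simply connected $\mathcal{G}$.
\item In the concrete Rogers--DeWitt setting, ``generated by $\exp(\langle X\vert\mathfrak{h}\rangle)$'' must mean $\exp$ applied to the even part of the Grassmann envelope $\langle X\vert\mathfrak{h}\rangle\otimes\mathbb{R}_S$. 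Exponentials of real even elements only produce points of the underlying ordinary group. The $\mathbb{R}_S$-linear extension of $e^{tD}$ still preserves that envelope, so your Step 3 goes through.
\end{itemize}
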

  \begin{proof}
  Observe the fact that any subspace $V$ of $\mathfrak{g}$ induces a distribution of left-invariant vector fields by left translation
  $$ \Delta_V(x)=(L_x)_*V, \qquad x\in \mathcal{G}. $$
  Consider the distribution $\Delta=\Delta_{\langle X \vert \mathfrak{h} \rangle}$, which is integrable by the Frobenius theorem  \cite{bruzzo1985differential, yagi1988super}. Let $I_\Delta(x)$ denotes the integral manifold of $\Delta$ through the point $x\in \mathcal{G}$, then the $\textnormal{ad}(X)$-invariance of $\Delta$ gives us
  $$ \phi_t^X(I_\Delta(x))=I_\Delta(\phi_t^X(x)) $$
  As $\Delta$ is induced by  subspace $\langle X \vert \mathfrak{h} \rangle$ of left-invariant vector fields, 
  $$I_\Delta (x)=x \langle X \vert \mathcal{H} \rangle.$$
  Thus, it is obtained that
  $$ \phi_t^X(x\langle X \vert \mathcal{H} \rangle)=\phi_t^X(x) \langle X \vert \mathcal{H} \rangle $$
  and in particular, for $x=e$,
  $$ \phi_t^X(\langle X \vert \mathcal{H} \rangle)= \langle X \vert \mathcal{H} \rangle .$$ 
  \end{proof}
  
  Thus, the Lie supergroup $\langle X \vert \mathcal{H} \rangle $ is $\mathcal{T}$-invariant and using this fact, we propose an explicit expression for Lie superalgebra $sLie(\Sigma)$ generated by the system $\Sigma$.

  \begin{theorem}
  The Lie superalgebra associated with the linear control system $\Sigma = (\mathcal{G},\mathfrak{D})$ of the form (\ref{linlie}) can be expressed as
  $$ L(\Sigma) \cong \langle X \vert \mathfrak{h} \rangle \oplus L(\mathcal{T}) \cong L(\langle X \vert \mathcal{H} \rangle \rtimes \mathcal{T}). $$
  \end{theorem}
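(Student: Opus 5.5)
The plan is to compute the Lie superalgebra $L(\Sigma)$ generated in $Vec(\mathcal{G})$ by the family $\mathfrak{D}$, i.e.\ by $X$ together with the control vectors $Y^1,\dots,Y^k,\Xi^1,\dots,\Xi^l$. The first step is to check that $L(\Sigma)$ coincides with the Lie superalgebra generated by $X$ and $\mathfrak{h}$. Since $\mathfrak{D}$ consists of the fields $X+\sum u_iY^i+\sum\nu_j\Xi^j$, differences of two elements of $\mathfrak{D}$ give the linear span of $C(\Sigma)$. Together with $X$ (the choice $u=0$) this generates the same superalgebra as $\{X\}\cup C(\Sigma)$, and hence the same as $\{X\}\cup\mathfrak{h}$.

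The second step shows that $\langle X\vert\mathfrak{h}\rangle+\mathbb{R}X$ is already a Lie subsuperalgebra of $Vec(\mathcal{G})$, so that it equals $L(\Sigma)$.
\begin{itemize}
\item By Lemma \ref{ad-invariant}, $\textnormal{ad}(X)$ maps $\mathfrak{g}$ into $\mathfrak{g}$.
\item By the construction of the sequence $\mathfrak{h}_i$, which stabilises at $\mathfrak{h}_p=\langle X\vert\mathfrak{h}\rangle$, the subsuperalgebra $\langle X\vert\mathfrak{h}\rangle$ is $\textnormal{ad}(X)$-invariant and closed under the bracket.
\item Since $X$ is even, $\llbracket X,X\rrbracket=0$.
\end{itemize}
Hence $\llbracket X,\langle X\vert\mathfrak{h}\rangle\rrbracket\subseteq\langle X\vert\mathfrak{h}\rangle$, and the span is closed under the bracket. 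Every element of this span lies in the superalgebra generated by $X$ and $\mathfrak{h}$, so the span equals $L(\Sigma)$.

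For the sum to be direct, I need $X\notin\langle X\vert\mathfrak{h}\rangle$ unless $X=0$. Here I use that $X$ is an infinitesimal automorphism, so $X_e=0$. If $X$ were left-invariant and vanished at $e$, it would be zero. In the degenerate case $X=0$ we have $\mathcal{T}=\{I\}$ and $L(\mathcal{T})=0$, so the statement still holds. Identifying $\mathbb{R}X$ with $L(\mathcal{T})$, the Lie superalgebra of the $(1\vert0)$-parameter group $\mathcal{T}\subset\textnormal{Aut}(\mathcal{G})$ generated by $X$, gives $L(\Sigma)\cong\langle X\vert\mathfrak{h}\rangle\oplus L(\mathcal{T})$ as super vector spaces. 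The bracket is the semidirect one: $\mathcal{T}$ acts on $\langle X\vert\mathfrak{h}\rangle$ by the derivation $\textnormal{ad}(X)$, which restricts correctly by the invariance just established.

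The third step realises this at group level. By Proposition \ref{prop1}, $\phi^X_t\in\textnormal{Aut}(\langle X\vert\mathcal{H}\rangle)$, so $\mathcal{T}$ acts on $\langle X\vert\mathcal{H}\rangle$ by automorphisms, and the semidirect product $\langle X\vert\mathcal{H}\rangle\rtimes\mathcal{T}$ is a Lie supergroup. By Theorem \ref{thmnorm} and the identification $\textnormal{aut}(\mathcal{G})\cong\textnormal{Sder}(\mathfrak{g})$, differentiating $\phi^X_t$ at $e$ gives the superderivation $-\textnormal{ad}(X)\vert_{\mathfrak{g}}$, with the sign as in Theorem \ref{thmnorm}. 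Therefore the Lie superalgebra of the semidirect product is $\langle X\vert\mathfrak{h}\rangle\rtimes_{\textnormal{ad}(X)}L(\mathcal{T})$, which matches the bracket from the second step. The same conclusion can be read off the injective homomorphism $\beta$ into $Diff(\mathcal{G})$ restricted to $\langle X\vert\mathcal{H}\rangle\times\mathcal{T}$.

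I expect the main obstacle to be the bookkeeping of signs and gradings in the third step. One must check that the semidirect-product bracket $\llbracket (V,sX),(W,rX)\rrbracket=(\llbracket V,W\rrbracket+s\,\textnormal{ad}(X)W-r\,\textnormal{ad}(X)V,0)$ agrees with the superbracket in $Vec(\mathcal{G})$ for odd $V,W$, and that the orientation convention $\textnormal{ad}(X)_{\mathfrak{g}}=-D$ does not introduce an inconsistent sign. I would handle this by treating $X$ as even throughout, so that all graded signs involving $X$ are trivial, and by absorbing the sign into the identification $L(\mathcal{T})\ni X\mapsto -X$ if necessary.
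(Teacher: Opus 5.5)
Your proposal is correct and follows essentially the paper's route: identify $L(\Sigma)$ with the semidirect sum of $\langle X\vert\mathfrak{h}\rangle$ and the line spanned by $X$ acting through $\textnormal{ad}(X)$, then use Proposition \ref{prop1} to realize it as the Lie superalgebra of $\langle X\vert\mathcal{H}\rangle\rtimes\mathcal{T}$. Your explicit closure argument, the directness via $X_e=0$, and the sign check against Theorem \ref{thmnorm} make precise what the paper does only by ``comparing brackets''; one small fix is to take the $\textnormal{ad}(X)$-invariance and bracket-closure of $\langle X\vert\mathfrak{h}\rangle$ from its definition as the smallest such subsuperalgebra, not from the stabilized sequence $\mathfrak{h}_p$, since the span of the iterates $\textnormal{ad}^i(X)(\mathfrak{h})$ need not itself be closed under the bracket.
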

  
  \begin{proof}
  This proof is divided into two parts. First, the Lie superalgebra structure of the semidirect sum  $\langle X \vert \mathfrak{h} \rangle \oplus L(\mathcal{T})$ is given and in the second part,  the Lie superalgebra structure of $L(\Sigma)$ is given subsequently the isomorphism is established. \\
  
  As Lie supergroup $\langle X \vert \mathcal{H} \rangle$ is $\mathcal{T}$-invariant, one can consider the canonical map
  $$ \rho:\mathcal{T} \rightarrow \textnormal{Aut}(\langle X \vert \mathcal{H} \rangle) $$
  and thus, $\langle X \vert \mathcal{H} \rangle \times \mathcal{T}$ is a subgroup of the Lie supergroup $\mathcal{G} \times \textnormal{Aut}(\mathcal{G}).$ Moreover, $\rho$ induces an analytic action
  \begin{align*}
  	\langle X \vert \mathcal{H} \rangle \times \mathcal{T} &\rightarrow \langle X \vert \mathcal{H} \rangle\\
  	(x,\phi_t^X) &\mapsto \phi_t^X(x).
  \end{align*}
  The semidirect product of $\langle X \vert \mathcal{H} \rangle$ with $\mathcal{T}$ relative to $\rho$ is well-defined w.r.t to following product,  
  $$(x_1,\phi_{t_1}^X)\circ (x_2,\phi_{t_2}^X)=(\phi_{t_1}^X(x_2)(x_1),\phi_{t_1+t_2}^X)$$
  for all $x_1,x_2 \in \langle X \vert \mathcal{H} \rangle$ and $\phi_{t_1}^X,\phi_{t_2}^X \in \mathcal{T}$. Denote this Lie Supergroup by
  $$S=\langle X \vert \mathcal{H} \rangle \rtimes \mathcal{T}.$$
  Furthermore, the derivative of the canonical map gives us
  $$ d\rho: L(\mathcal{T}) \rightarrow \textnormal{Der} (\langle X \vert \mathcal{H} \rangle) \subset \textnormal{Aut}(\langle X \vert \mathcal{H} \rangle). $$
  This forms a homomorphism, in fact, for all $W\in L(\mathcal{T})$
  $$d\rho (W)(Y)=\llbracket W,Y \rrbracket,\qquad \forall Y\in \langle X \vert \mathcal{H} \rangle.$$
  
  Now $d\rho$ induces a Lie superalgebra structure  on$\langle X \vert \mathfrak{h} \rangle \times  L(\mathcal{T}),$ where the Lie superbracket is given as 
  \begin{align*}
  	\llbracket (Y_1,W_1),(Y_2,W_2) \rrbracket =& \bigg(  \llbracket Y_1,Y_2 \rrbracket+d\rho (W_1)(Y_2)- (-1)^{\vert W_2 \vert \vert Y_1\vert} d\rho(W_2)(Y_1), ~\llbracket W_1,W_2 \rrbracket \bigg)
  \end{align*}

  Thus, the semi-direct sum $\langle X \vert \mathfrak{h} \rangle \oplus L(\mathcal{T})$ relative to $d\rho$ is well-defined. Denote the Lie superalgebra $\mathfrak{s}:=\langle X \vert \mathfrak{h} \rangle \oplus L(\mathcal{T})$. \\
  
  In the second part, we consider the Lie superalgebra structure of $L(\Sigma)$. As   
  \begin{align*}
  	L(\Sigma)= \textnormal{Span}_{L.S.A.} \biggr\{  X+ \sum_{i=1}^{k} u_i Y^i + \sum_{j=1}^{l} \nu_j \Xi^j \biggr\rvert ~u \in \mathbb{R}^{k \vert 0}, \nu \in \mathbb{R}^{0 \vert l} \biggr\} ,
  \end{align*}
  we may choose any arbitrary elements like $X+Y^i,~X+\Xi^j$ and find the Lie superbracket
  \begin{align*}
  	\llbracket X+Y^i,~X+Y^j \rrbracket &= \llbracket Y^i,~Y^j \rrbracket+ \llbracket X,~Y^j \rrbracket+\llbracket Y^i,~X \rrbracket,\\
  	\llbracket X+Y^i,~X+\Xi^j \rrbracket &= \llbracket Y^i,~\Xi^j \rrbracket+ \llbracket X,~\Xi^j \rrbracket+\llbracket Y^i,~X \rrbracket,\\
  	\llbracket X+\Xi^i,~X+\Xi^j \rrbracket &= \llbracket \Xi^i,~\Xi^j \rrbracket+ \llbracket X,~\Xi^j \rrbracket+\llbracket \Xi^i,~X \rrbracket.
  \end{align*}
  We observe that $\llbracket \mathfrak{s},\mathfrak{s} \rrbracket $ is contained in $ \langle X \vert \mathfrak{h} \rangle$ due to the supercommutativity of $L(\mathcal{T})$. Now, considering the graded skew symmetry property of Lie superbracket and comparing the  superbrackets of $\mathfrak{s}$ and $L(\Sigma)$, it is clear that both are isomorphic. Hence, the first equality holds.\\
  
  For the second equality, use the fact that the Lie superalgbera of semidirect product of Lie supergroups is isomorphic to the semi direct sum of corresponding Lie superalgberas.

  \end{proof}
  
  Now we give an important theorem for a linear control system to be transitive, which is analogous to the Lie Algebra Rank condition (LARC) in ordinary case.

  \begin{theorem}
  If the system $\Sigma$ is transitive, then it satisfies the Lie Superalgebra Rank Condition (LSARC) i.e.,
  \begin{align*}
  	\textnormal{dim}\Big( \textnormal{Span}_{L.S.A.} \big \{ C(\Sigma), ad(X)(C(\Sigma))\big\} \Big) =\textnormal{dim}(\mathcal{G})  
  \end{align*}
  or equivalently,
  \begin{align*}
  	\textnormal{dim}\Big( \textnormal{Span}_{L.S.A.} \big \{ & Y^{i_1},\Xi^{j_1}, ad^{i_2}(X)(Y^{i_1}), ad^{j_2}(X)(\Xi^{j_1}) \big \vert \\
  	& 0 \leq i_2,j_2 \leq p, 1\leq i_1\leq k, 1\leq j_1 \leq l \big\} \Big) =\textnormal{dim}(\mathcal{G}).  
  \end{align*}
  \end{theorem}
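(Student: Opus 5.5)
The plan is to prove the statement by contraposition. We use the structure of $L(\Sigma)$ obtained in the preceding theorem together with Proposition \ref{prop1}. Suppose the span in question, which by the construction of the sequence $\mathfrak{h}_i$ and Lemma \ref{ad-invariant} is exactly $\mathfrak{h}_p=\langle X\vert\mathfrak{h}\rangle$, is a proper Lie subsuperalgebra of $\mathfrak{g}$. The two formulations in the statement coincide because $\mathfrak{h}_p$ is the superalgebra generated by $C(\Sigma)$ and its iterated images under $\textnormal{ad}(X)$. Since $\mathfrak{g}$ is finite-dimensional, the chain $\mathfrak{h}_0\subseteq\mathfrak{h}_1\subseteq\cdots$ stabilizes at $p$, so higher powers $\textnormal{ad}^i(X)$ add nothing.

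Next we show that the orbit of the identity is contained in $\langle X\vert\mathcal{H}\rangle$. Every vector field in $\mathfrak{D}$ has the form $X+Y$ with $Y\in\mathfrak{h}\subseteq\langle X\vert\mathfrak{h}\rangle$. We check that at each point $x\in\langle X\vert\mathcal{H}\rangle$ the vector $(X+Y)(x)$ is tangent to $\langle X\vert\mathcal{H}\rangle$. For $Y$ this holds because left-invariant fields from $\langle X\vert\mathfrak{h}\rangle$ span the distribution $\Delta$ whose integral manifold through $e$ is $\langle X\vert\mathcal{H}\rangle$. For $X$ it follows from Proposition \ref{prop1}: $\phi^X_t$ preserves $\langle X\vert\mathcal{H}\rangle$, so $X$ is tangent to it. Hence every flow $\phi^{Z}_t$ with $Z\in\mathfrak{D}$ maps $\langle X\vert\mathcal{H}\rangle$ into itself, by uniqueness of integral curves on the integral supermanifold. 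Taking compositions gives $G_\Sigma(e)\subseteq\langle X\vert\mathcal{H}\rangle$.

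The last step is the dimension comparison. If the system is transitive, then $G_\Sigma(e)=\mathcal{G}$, so $\langle X\vert\mathcal{H}\rangle=\mathcal{G}$. By the correspondence between connected Lie subsupergroups and Lie subsuperalgebras, this gives $\langle X\vert\mathfrak{h}\rangle=\mathfrak{g}$. Equality of dimensions then holds in both the even and the odd parts, that is, $\dim\mathfrak{h}_p=\dim\mathcal{G}=(m\vert n)$, which is the LSARC.

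The main obstacle is the tangency/invariance step in the super setting. The subgroup $\langle X\vert\mathcal{H}\rangle$ is only an immersed integral supermanifold, and flows are parametrized by even $t\in\mathbb{R}^{1\vert0}$ with arbitrary soul part. The first thing to try is to work with body parts: reduce to the case $t_B$ real using the $H^\infty$ Taylor expansion in the soul, and apply the super Frobenius theorem cited in Proposition \ref{prop1} to get uniqueness of integral curves inside the leaf. An alternative route is the orbit theorem already quoted in the paper, namely that $G_\Sigma(x)$ is the integral supermanifold of the distribution generated by $sLie(\Sigma)$. Combined with the preceding theorem $L(\Sigma)\cong\langle X\vert\mathfrak{h}\rangle\oplus L(\mathcal{T})$, whose evaluation at $e$ is $\langle X\vert\mathfrak{h}\rangle$ because $X_e=0$, this directly gives $T_eG_\Sigma(e)=\langle X\vert\mathfrak{h}\rangle$. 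Transitivity then forces this space to be all of $T_e\mathcal{G}$.
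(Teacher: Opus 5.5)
Your argument is correct and is essentially the paper's. Proposition~\ref{prop1} keeps $G_\Sigma(e)$ inside $\langle X\vert\mathcal{H}\rangle$, transitivity then forces $\langle X\vert\mathcal{H}\rangle=\mathcal{G}$ and hence $\langle X\vert\mathfrak{h}\rangle=\mathfrak{g}$, and you make explicit the tangency and uniqueness-of-integral-curves step (for $X$ and for the left-invariant controls) that the paper leaves implicit. One caution, inherited from the paper: the linear sum $\mathfrak{h}_p=\sum_{i\le p}\textnormal{ad}^i(X)(\mathfrak{h})$ need not be closed under the bracket, so identify $\langle X\vert\mathfrak{h}\rangle$ directly with the Lie superalgebra generated by all iterates $\textnormal{ad}^i(X)(C(\Sigma))$ (it is $\textnormal{ad}(X)$-invariant because the even derivation $\textnormal{ad}(X)$ maps this generating set into itself), and that generated superalgebra is exactly the span the LSARC refers to.
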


  \begin{proof}
  If the system is transitive, using proposition (\ref{prop1}) we get
  $$G_\Sigma (e)=\langle X \vert \mathcal{H} \rangle=\mathcal{G}$$ 
  and the integral manifold of $L(\Sigma)$ through $x\in \mathcal{G}$ coincides with the orbit of $x$. Again  for every $x\in \langle X \vert \mathcal{H} \rangle $
  $$ \dfrac{d}{dt} \biggr\rvert_{t=0} \phi_t^X(x) \in T_x \langle X \vert \mathcal{H} \rangle$$
  or equivalently
  $$ X_x \in (L_x)_* \langle X \vert \mathcal{H} \rangle.$$
  So, there exists some positive integer $p$ less than the dimension $n$ of the $\mathcal{G}$ such that
  $$ \langle X \vert \mathfrak{h} \rangle = \mathfrak{h_{p-1}} +ad^p (X)(\mathfrak{h})=\mathfrak{g} $$
  and hence it follows the LSARC.    
  \end{proof}

  The above theorem provides a necessary condition for controlability whereas for sufficient case we require more technicalities, which is given below.

  \begin{theorem}
  Let $\mathcal{G}$ be a connected Lie supergroup and let $\Sigma=(\mathcal{G},\mathfrak{D})$ be a transitive linear control system whose drift vector field $X$ is an infinitesimal automorphism. Then $\Sigma$ is locally controllable if it satisfies the super ad-rank condition i.e.,
  $$ \textnormal{dim}\Big( \textnormal{Span} \big \{ C(\Sigma), ad(X)(C(\Sigma))\big\} \Big) =\textnormal{dim}(\mathcal{G}) .$$
  or equivalently,
  \begin{align*}
  	\textnormal{dim}\Big(\textnormal{Span} \big \{ & Y^{i_1},\Xi^{j_1}, ad^{i_2}(X)(Y^{i_1}), ad^{j_2}(X)(\Xi^{j_1})\big \vert \\
  	& 0 \leq i_2,j_2 \leq p, 1\leq i_1\leq k, 1\leq j_1 \leq l \big\} \Big) =\textnormal{dim}(\mathcal{G})     .     
  \end{align*}

  \end{theorem}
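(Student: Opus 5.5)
The plan follows the classical argument of Ayala and Tirao for linear systems on Lie groups, which I will adapt to the super setting. I work near the identity $e$, which is a singular point of the drift: since $X$ is an infinitesimal automorphism, $\phi_t^X(e)=e$. Local controllability then means that the reachable set $S_\Sigma(e)$ contains a neighbourhood of $e$, and this can afterwards be translated to any point using Proposition \ref{prop1} and the transitivity hypothesis.

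\textbf{Step 1: endpoint map.} Fix a time $T>0$ (its body part is positive). For piecewise constant controls $u=(u;\nu)$, consider the endpoint map
\[E_T(u)=\text{the solution at time } T \text{ of } \dot x = X(x)+\textstyle\sum_i u_iY^i(x)+\sum_j \nu_j\Xi^j(x),\ x(0)=e.\]
The zero control is mapped to $e$. I would compute the differential of $E_T$ at $u=0$ by linearizing along the trajectory $x\equiv e$. Since $\phi_t^X$ is an automorphism, $(d\phi_t^X)_e=e^{tD}$ with $D=-\textnormal{ad}(X)|_{\mathfrak g}$; this uses Theorem \ref{thmnorm} and the identity $(d\,\textnormal{Exp}\,D)_e=e^D$. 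The variation of constants formula then gives
\[dE_T(0)(v)=\int_0^T e^{(T-s)D}\Big(\textstyle\sum_i v_i(s)Y^i_e+\sum_j \nu_j(s)\Xi^j_e\Big)\,ds .\]
For the odd directions $\Xi^j$ the control $\nu_j$ is an odd element of $\mathbb R_S$. Because $D$ is even, the same formula holds componentwise in the $\mathbb Z_2$-graded sense.

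\textbf{Step 2: surjectivity of the differential.} The image of $dE_T(0)$ is the smallest $e^{tD}$-invariant subspace containing $C(\Sigma)$. By the usual Cayley--Hamilton/Kalman argument, this subspace is $\textnormal{Span}\{\textnormal{ad}^{i}(X)(Y^{i_1}),\textnormal{ad}^{j}(X)(\Xi^{j_1})\}$. One differentiates $s\mapsto e^{sD}W$ repeatedly at $s=0$, and finite dimensionality of $\mathfrak g$ lets us truncate at $p$. The even and odd parts are handled separately, since $D$ preserves parity. By the super ad-rank condition this span equals $\mathfrak g=T_e\mathcal G$ in both the even and the odd components, so $dE_T(0)$ is onto.

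\textbf{Step 3: local controllability.} I apply the inverse function theorem for $H^\infty$ maps on supermanifolds, restricted to a finite-dimensional family of piecewise constant controls with small amplitudes and switching times. The image of $E_T$ then contains a neighbourhood $U$ of $e$, and every point of $U$ is reached at time $T$ with admissible controls, so $U\subset S_\Sigma(e)$. Transitivity and the previous theorem give $G_\Sigma(e)=\langle X\vert\mathcal H\rangle=\mathcal G$; this lets us place the analysis in the right state space. Finally, for any $x$, the reachable set from $x$ contains $\phi_T^X(x)\cdot$(a neighbourhood) by the same argument applied to left-translated controls. This holds because the trajectory $x(t)$ from $x$ satisfies $x(t)=\phi_t^X(x)\,x_e(t)$, where $x_e(t)$ is the trajectory from $e$, using the automorphism property.

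\textbf{Main obstacle.} I expect the hardest step to be the rigorous super-version of Step 3. The odd controls take values in $\mathbb R_{S[L,1]}$, so the endpoint map must be seen as an $H^\infty$ map between superspaces, and the super inverse function theorem requires the differential to be invertible on the body level together with the full odd block. I would try first to reduce to a finite-dimensional parametrization: I would fix $\dim\mathcal G$ concatenated constant pieces with parameters in $\mathbb R_S^{m\vert n}$, and check that the Jacobian's Berezinian-relevant blocks (the even--even and odd--odd blocks) are invertible via the rank condition.
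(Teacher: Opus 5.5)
Your proposal is correct and follows the paper's route. Both arguments use the endpoint map at the zero control, with $e$ fixed by the drift, compute its differential by the variation-of-constants integral, show via the Kalman-type span argument (the paper phrases it dually with an annihilating covector and body parts) that the rank condition makes this differential onto, and then finish with an open-mapping step. You are in fact more careful than the paper, which passes from this to $S_\Sigma(e)=\mathcal{G}$ via connectedness, unjustified since $S_\Sigma(e)$ is not a subgroup; you correctly stop at a neighbourhood of $e$ and make the parity bookkeeping and the super inverse function theorem explicit. One small caution: your identity $x(t)=\phi_t^X(x)\,x_e(t)$ only shows that the reachable set from $x$ contains a neighbourhood of $\phi_T^X(x)$, not of $x$ itself, so it does not support your remark about translating local controllability to arbitrary points (that remark is not needed for the statement anyway).
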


  \begin{proof}
  Let $x(t,u)$ be the solution of the differential equation 
  $$\dot{x}(t)= X(x(t))+\sum_{i=1}^{k} u_i(t)Y_i(t)+\sum_{j=1}^{l} \nu_j(t)\Xi_j(t).$$
  For $t_B>0$ consider the map $E_t:\mathcal{U} \rightarrow \mathcal{G}$  that takes $u$ to $x(t,u)$. Its derivative in some neighbourhood $N_0$ of $u=0$ is given by 
  $$ \dfrac{\partial E_t(u)}{\partial u} \biggr \vert_0=\int_0^{t} e^{s~\textnormal{ad}X} \left( \sum_{i=1}^k u_i(s)Y^i + \sum_{j=1}^l u_j(s)\Xi^j \right)  ds ~\circ~ e^{tX}. $$
  
  Denote $\frac{\partial E_t(u)}{\partial u} \big \vert_0=(F_t)_0$ and observe that this vector lies in the tangent space $T_e\mathcal{G}$. So, there must exist a vector $w$ in its dual space $T_e\mathcal{G}^\ast$ such that $\forall u \in N_0$
  $$  \big \langle w, (F_t)_0 \big\rangle =0 \iff \big \langle w_B, ((F_t)_0)_B \big\rangle =0$$
 
  which is equivalent of 

  $$  \langle w_B,  (e^{s~\textnormal{ad}X}   (Y^{i_1}+\Xi^{j_1}))_B \rangle =0 , ~ \forall s \in [0,t]$$
  Using the series expansion we obtain
  $$ \langle w_B,  (\textnormal{ad}^{i_2}(X)   Y^{i_1} + \textnormal{ad}^{j_2}(X)   \Xi^{j_1})_B  \rangle =0 , $$ for each  $i_2,j_2\geq 0, ~ 1 \leq i_1 \leq k,~1 \leq j_1 \leq l$ which contradicts the rank condition.  So, the linear map $ (F_t)_0 $ is onto in the neighbourhood $N_0$. Again, as G is a connected Lie group, it can be generated by the open subspaces i.e.,  $\cup_m N^m=\mathcal{G} $. Hence, we get 
  $$S_\Sigma (e) = G$$
  and the system is controllable.  
  \end{proof}

  \begin{corollary}
  When the Lie supergroup $\mathcal{G}$ is taken to be $\mathbb{R}_S^{m \vert n}$, then the rank condition reduces to the extended Kalman Rank condition.
  \end{corollary}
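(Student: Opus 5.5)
The plan is to specialize the ingredients of the preceding theorem to the abelian supergroup $\mathcal{G}=\mathbb{R}_S^{m\vert n}$. The rank condition should then collapse to the statement that the columns of $B, AB, A^2B,\dots$ span $\mathbb{R}^{m\vert n}$, with the even and odd blocks kept separate.

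First, identify the data of $\Sigma$. Since $\mathcal{G}$ is abelian, $\mathfrak{g}=\mathbb{R}^{m\vert n}$ carries the zero superbracket. The left-invariant vector fields are the constant fields, so each $Y^i$ corresponds to a fixed even vector $b^i$ and each $\Xi^j$ to a fixed odd vector $\beta^j$. By the last corollary of the previous subsection, a drift $X\in\textnormal{norm}_{Vec(\mathcal{G})}(\mathfrak{g})$ with $X_e=0$ (the infinitesimal automorphism case) has the form $X_p=A(p)$. Here $A$ is an even linear operator, i.e.
\[
A=\begin{pmatrix} A_{11} & 0\\ 0 & A_{22}\end{pmatrix}
\]
in block form, with $A_{11}\in\mathfrak{gl}_m$ and $A_{22}\in\mathfrak{gl}_n$. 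This agrees with the Remark that the system reduces to $\dot x=Ax+Bu$, where $B=[b^1\cdots b^k\,\vert\,\beta^1\cdots\beta^l]$.

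Second, compute the iterated brackets. For the linear field $A$ and a constant field $b$, the superbracket $\llbracket X,b\rrbracket$ is the constant field $-Ab$; this is the same computation used to obtain $\textnormal{ad}(X)_{\mathfrak g}=-D$ in Theorem \ref{thmnorm}. Because $A$ is even, no sign $(-1)^{\vert\cdot\vert\vert\cdot\vert}$ intervenes. Iterating gives $\textnormal{ad}^{i}(X)(Y^{i_1})=(-1)^iA^ib^{i_1}$ and $\textnormal{ad}^{j}(X)(\Xi^{j_1})=(-1)^jA^j\beta^{j_1}$. Signs do not change spans.

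Because $\mathfrak{g}$ is abelian, all brackets among the $Y$'s, the $\Xi$'s and their $\textnormal{ad}(X)$-images vanish. Hence $\textnormal{Span}_{L.S.A.}$ coincides with the linear span. The stopping index $p$ can be taken to be $\max(m,n)-1$, by Cayley--Hamilton applied to each diagonal block. The condition $\dim=\dim\mathcal{G}=(m\vert n)$ therefore reads
\[
\textnormal{rank}\,[\,B\;\;AB\;\;\cdots\;\;A^{p}B\,]=(m\vert n).
\]

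Third, make the word ``extended'' precise. Since $A$ is even, it maps even vectors to even vectors and odd to odd. So the span splits into an even part and an odd part. The even part is $\textnormal{span}\{A_{11}^ib_0^{i_1}\}$, where $b_0^{i_1}\in\mathbb{R}^m$ is the even component; it must equal $\mathbb{R}^m$. The odd part is $\textnormal{span}\{A_{22}^j\beta_1^{j_1}\}$, which must equal $\mathbb{R}^n$. Together these are exactly the extended Kalman rank condition $\textnormal{rank}[B_0,A_{11}B_0,\dots]=m$ and $\textnormal{rank}[B_1,A_{22}B_1,\dots]=n$ of \cite{sahoo2024controllability}.

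The main obstacle is this last step. Rank over the Grassmann algebra is not the naive matrix rank: the coefficients $b^i$ and $\beta^j$ lie in $\mathbb{R}_{S[L,0]}$ and $\mathbb{R}_{S[L,1]}$, and nilpotent entries could spoil invertibility. I would handle it by passing to body parts via $\epsilon$, as the proof of the previous theorem did with $w_B$. A set of supervectors spans a free $\mathbb{R}_S$-module of rank $(m\vert n)$ if and only if the corresponding body matrix has full rank. This reduces the statement to ordinary Kalman rank on each block.
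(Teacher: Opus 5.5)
Your proposal follows the paper's route. The paper's proof simply computes $\textnormal{ad}^i(X)(Y)$ for $X=Ax$ and a constant field $Y=b$ and gets $A^ib$; your $(-1)^iA^ib$ differs only by the bracket convention, which does not change spans, so the rank condition becomes the Kalman condition on $[\,B\;\;AB\;\;A^2B\;\cdots\,]$. Your extra observations (abelian $\mathfrak{g}$ makes the Lie-superalgebraic span equal the linear span, and evenness of $A$ splits the condition into the $A_{11}$ and $A_{22}$ blocks) only make explicit what the paper leaves implicit.

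One caution about your final paragraph: it is not needed, and as written it would not work. In this setting $b^i,\beta^j\in\mathfrak{g}=\mathbb{R}^{m\vert n}$ are real vectors; the Grassmann-odd parameter is carried by the control $\nu_j$, not by $\beta^j$. If the entries of $\beta^j$ really lay in $\mathbb{R}_{S[L,1]}$, their bodies would vanish, so passing to body parts via $\epsilon$ would erase the odd block instead of reducing it to the Kalman condition for $A_{22}$.
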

  
  \begin{proof}
  On Lie supergroup $\mathcal{G} =\mathbb{R}^{m\vert n}$, the simplest case of linear system is obtained by taking $X=Ax$ and $Y=b$. So, the linear system is $\dot{x}(t)=Ax+ub$ and the adjoints are
  \begin{align*}
  	\textnormal{ad}^0 X(Y) &=b\\
  	\textnormal{ad}^1 X(Y) &=\llbracket X,Y \rrbracket =\llbracket Ax,b \rrbracket =Ab\\
  	\textnormal{ad}^2 X(Y) &=\llbracket Ax,Ab \rrbracket =A^2b,\\
  	& \vdots
  \end{align*}
  where the Lie superbracket is given as
  $$ \llbracket X,Y\rrbracket = \frac{\partial}{\partial x} X \cdot Y - (-1)^{\vert X \vert \vert Y \vert } X \cdot \frac{\partial}{\partial x} Y. $$  
  \end{proof}

  \begin{remark}
  The above result is known as the Extended Kalman rank condition which has been established for linear systems on superspace \cite{sahoo2024controllability}.
  \end{remark}

\subsection{Examples}\label{sec4.3}

Take a control system on a Lie subsupergroup $\mathcal{G}$ of $GL(m\vert n)$ of the form
\begin{equation*}
	\dot{P}=X (P)+ \sum_{j=1}^{k} u_j Y^jP
\end{equation*} 
Here, the drift vector field $X $ defined by $X (P)=AP - (-1)^{\vert A \vert \vert P \vert} PA$ is induced by an element $A$ in the Lie superalgebra $M(m\vert n) $ and $Y^j$'s  are defined by left multiplication. Here, the flow of $X$
$$\phi_t^X (P)=e^{tA} P e^{-tA}, $$
for each $t \in \mathbb{R}^{1\vert 0}$ is an automorphism of $\mathcal{G}$.

\begin{example} 
	Consider the Lie supergroup $\mathcal{G}=SL(1\vert 1)$ which is a subgroup of $GL(1 \vert 1)$ having $sdet=1$. The corresponding Lie superalgebra $ \mathfrak{sl}(1 \vert 1)$ is generated by
	\begin{equation*}
		Y^1=  \left(\begin{array}{@{}c|c@{}}
			1 & 0  \\  \hline
			0 & 1  
		\end{array}\right), \Xi^1 =  \left(\begin{array}{@{}c|c@{}}
			0 & 1 \\  \hline
			0 & 0  
		\end{array}\right),~ \Xi^2=  \left(\begin{array}{@{}c|c@{}}
			0 & 0 \\ \hline
			1 & 0 
		\end{array}\right)
	\end{equation*}
	and satisfying the non-zero super commutation relation $\llbracket \Xi^1,\Xi^2 \rrbracket  = Y^1 $.

	Take a matrix $ X =  \left(\begin{array}{@{}c|c@{}}
		2 & 0 \\ \hline
		0 & 1 
	\end{array}\right) . $ and the $(1 \vert 0)$-parameter group of automorphism $(\phi_t^X)$ of $\mathcal{G}$ induced by $X$, i.e.,
	$$\phi_t^X(g)=e^{tX} ge^{-tX},~t\in \mathbb{R}^{1\vert 0}, g \in \mathcal{G}.$$
	
	Now consider a linear control system $\Sigma=(\mathcal{G},\mathfrak{D})$ with 
	$$ \mathfrak{D}= \left\{ X+ \nu_1 \Xi^1+\nu_2\Xi^2 \right\} $$
	
	Here, $C(\Sigma)=\{\Xi^1,~\Xi^2 \}$. So,  $\mathfrak{h}=\textnormal{Span}_{L.S.A} (C(\Sigma)) = \textnormal{Span}\{\Xi^1,~\Xi^2,~Y^1 \} $ and 
	\begin{equation*}
		\textnormal{ad}(X)(\Xi^1)=\Xi^2, ~\textnormal{ad}(X)(\Xi^2)=\Xi^1
	\end{equation*}
	gives us,
	$  \langle X \vert \mathfrak{h} \rangle = \mathfrak{sl}(2 \vert 1).$
	However,  $\textnormal{Span}\{ \Xi_1,~\Xi_2 \} \neq \mathfrak{sl}(2 \vert 1).$

	Thus, the system is transitive by LSARC but not locally controllable due to the super ad-rank condition.
\end{example}

\begin{example}
	Let us consider the Lie supergroup $\mathcal{G}=SL(2\vert 1)$ which is a subgroup of $GL(2 \vert 1)$ having $sdet=1$.

	The corresponding Lie superalgebra $\mathfrak{sl}(2 \vert 1)$ is generated by
	\begin{equation*}
		\begin{split}
			& Y^1=  \left(\begin{array}{@{}cc|c@{}}
				\frac{1}{2} & 0  & 0  \\
				0 & \frac{-1}{2} & 0 \\ [2pt] \hline
				0 & 0  & 0 
			\end{array}\right),~ Y^2=  \left(\begin{array}{@{}cc|c@{}}
				\frac{1}{2} & 0  & 0  \\
				0 & \frac{1}{2} & 0 \\ [2pt] \hline
				0 & 0  & 1 
			\end{array}\right),~Y^3=  \left(\begin{array}{@{}cc|c@{}}
				0 & 1 & 0  \\
				0 & 0 & 0 \\  \hline
				0 & 0 & 0 
			\end{array}\right),~Y^4=  \left(\begin{array}{@{}cc|c@{}}
				0 & 0 & 0  \\
				1 & 0 & 0 \\  \hline
				0 & 0 & 0 
			\end{array}\right), \\
			& \Xi^1 = \left(\begin{array}{@{}cc|c@{}}
				0 & 0 & 0  \\
				0 & 0 & 0 \\  \hline
				0 & 1 & 0 
			\end{array}\right),~ \Xi^2=\left(\begin{array}{@{}cc|c@{}}
				0 & 0 & 1  \\
				0 & 0 & 0 \\  \hline
				0 & 0 & 0 
			\end{array}\right),~ \Xi^3=\left(\begin{array}{@{}cc|c@{}}
				0 & 0 & 0  \\
				0 & 0 & 0 \\  \hline
				1 & 0 & 0 
			\end{array}\right),~\Xi^4=\left(\begin{array}{@{}cc|c@{}}
				0 & 0 & 0  \\
				0 & 0 & 1 \\  \hline
				0 & 0 & 0 
			\end{array}\right) 
		\end{split}
	\end{equation*}
	
	satisfying the following super commutation relation
	
	\[
	\begin{array}{llll}
		\llbracket Y^1, Y^3 \rrbracket=Y^3,~~&\llbracket Y^1, Y^4 \rrbracket=-Y^4,~~& \llbracket Y^1, \Xi^1 \rrbracket=\frac{1}{2} \Xi^1,~~&\llbracket Y^1, \Xi^3 \rrbracket=-\frac{1}{2} \Xi^3,\\ [4pt]
		
		\llbracket Y^1, \Xi^2 \rrbracket=\frac{1}{2} \Xi^2,~~&\llbracket Y^1, \Xi^4 \rrbracket=-\frac{1}{2} \Xi^4,~~&\llbracket Y^2, \Xi^1 \rrbracket=\frac{1}{2} \Xi^1,~~&\llbracket Y^2, \Xi^3 \rrbracket=\frac{1}{2} \Xi^3,\\ [4pt]
		
		\llbracket Y^2, \Xi^2 \rrbracket=-\frac{1}{2} \Xi^2,~~&\llbracket Y^2, \Xi^4 \rrbracket=-\frac{1}{2} \Xi^4,~~&\llbracket Y^3, \Xi^3 \rrbracket=- \Xi^1,~~&\llbracket Y^3, \Xi^4 \rrbracket=\Xi^2 ,\\ [4pt]
		
		\llbracket Y^4, \Xi^1 \rrbracket=- \Xi^3,~~&\llbracket Y^4, \Xi^2 \rrbracket= \Xi^4,~~&\llbracket Y^3, Y^4 \rrbracket=2Y^1,~~&\llbracket \Xi^1, \Xi^2 \rrbracket=Y^3,\\ [4pt]
		
		\llbracket \Xi^1, \Xi^4 \rrbracket=Y^2-Y^1,~~&\llbracket \Xi^3, \Xi^2 \rrbracket=Y^2+Y^1,~~&\llbracket \Xi^3, \Xi^4 \rrbracket=Y^4.
	\end{array}
	\]

	Now take $ X =  \left(\begin{array}{@{}cc|c@{}}
		0 & 0 & 0  \\
		1 & 0 & 0 \\ \hline
		0 & 0 & 0 
	\end{array}\right) $ and consider a linear control system $\Sigma=(\mathcal{G},\mathfrak{D})$ with 
	$$ \mathfrak{D}= \left\{ X+ u_2Y^2+\nu_1 \Xi^1+\nu_2\Xi^2 \right\}. $$

	Here, $ C(\Sigma)=\{Y^2,~\Xi^1,~\Xi^2 \}, ~ \mathfrak{h}=\textnormal{Span}_{L.S.A.}\{ C(\Sigma) \} = \textnormal{Span}\{ Y^2,~Y^3,~\Xi^1,~\Xi^2 \} $ and we have,
	\begin{align*}
		&\textnormal{ad}(X)(Y^3)=Y^1,~\textnormal{ad}(X)(\Xi^1)=\Xi^3, ~\textnormal{ad}(X)(\Xi^2)=\Xi^4,\\
		~&\textnormal{ad}^2(X)(Y^3)=Y^4.
	\end{align*}
	
	Now we get $\langle X\vert \mathfrak{h} \rangle=\textnormal{Span}_{L.S.A.}\{ C(\Sigma), ad(X)(C(\Sigma))\}=\mathfrak{sl}(2 \vert 1)$ and also $\textnormal{Span}\{ C(\Sigma), ad(X)(C(\Sigma))\}=\mathfrak{sl}(2 \vert 1)$. Thus, by the super ad-rank condition the system is locally controllable.
	
	However, if we take $ X =  \left(\begin{array}{@{}cc|c@{}}
		0 & 1 & 0  \\
		-1 & 0 & 0 \\ \hline
		0 & 0 & 0 
	\end{array}\right) $  with the same linear control system $\Sigma=(\mathcal{G},\mathfrak{D})$,	then we have,
	\begin{align*}
		&\textnormal{ad}(X)(\Xi^1)=\Xi^3, ~\textnormal{ad}(X)(\Xi^2)=\Xi^4\\
		&\textnormal{ad}(X)^2(\Xi^1)=-\Xi^1, ~\textnormal{ad}(X)^2(\Xi^2)=\Xi^2.
	\end{align*}
	
	So, we get $\langle X\vert \mathfrak{h} \rangle=\textnormal{Span}_{L.S.A.}\{ C(\Sigma), ad(X)(C(\Sigma))\}=\mathfrak{sl}(2 \vert 1)$ but  $Y^1,~Y^4 \not\in  \textnormal{Span}\{ C(\Sigma), ad(X)(C(\Sigma))\}$. Thus, the system is transitive by LSARC but not controllable by the super ad-rank condition.

\end{example}

\begin{example}
	Let us consider the Lie supergroup $\mathcal{G}=OSp(2\vert 1)$ which is a subgroup of $GL(2 \vert 1)$. The corresponding Lie superalgebra $\mathfrak{osp}(2 \vert 1)$ is generated by

	\begin{equation*}
		\begin{split}
			& Y^1=  \left(\begin{array}{@{}cc|c@{}}
				\frac{1}{2} & 0  & 0  \\
				0 & \frac{-1}{2} & 0 \\ [2pt] \hline
				0 & 0  & 0 
			\end{array}\right),~ Y^2=  \left(\begin{array}{@{}cc|c@{}}
				0 & 1 & 0  \\
				0 & 0 & 0 \\  \hline
				0 & 0 & 0 
			\end{array}\right),~Y^3=  \left(\begin{array}{@{}cc|c@{}}
				0 & 0 & 0  \\
				1 & 0 & 0 \\  \hline
				0 & 0 & 0 
			\end{array}\right) \\
			& \Xi^1 = \left(\begin{array}{@{}cc|c@{}}
				0 & 0 & \frac{1}{2}  \\
				0 & 0 & 0 \\  \hline
				0 & \frac{1}{2} & 0 
			\end{array}\right),~ \Xi^2=\left(\begin{array}{@{}cc|c@{}}
				0 & 0 & 0  \\
				0 & 0 & -\frac{1}{2} \\[2pt]  \hline 
				\frac{1}{2} & 0 & 0 
			\end{array}\right)
		\end{split}
	\end{equation*}

	satisfying the following super commutation relation

	\[
	\begin{array}{llll}
		\llbracket Y^1, Y^2 \rrbracket=Y^1 ,~~ &\llbracket Y^1, Y^3 \rrbracket=-Y^3, ~~&\llbracket Y^1, \Xi^1 \rrbracket=\frac{1}{2} \Xi^1,~~&\llbracket Y^1, \Xi^2 \rrbracket=-\frac{1}{2}\Xi^2,  \\[6pt]
		
		\llbracket Y^2, \Xi^2 \rrbracket=- \Xi^1, ~~&\llbracket Y^2, Y^3 \rrbracket=2Y^1,~~& \llbracket Y^3, \Xi^1 \rrbracket=-\Xi^2,  ~~&\llbracket \Xi^1, \Xi^2 \rrbracket=\frac{1}{2} Y^1,\\[6pt]
		
		\llbracket \Xi^1, \Xi^1 \rrbracket=\frac{1}{2} Y^2,~~&\llbracket \Xi^2, \Xi^2 \rrbracket=-\frac{1}{2} Y^3.
	\end{array}
	\]

	Take a matrix $ X =  \left(\begin{array}{@{}cc|c@{}}
		0 & 1 & 0  \\
		1 & 0 & 0 \\ \hline
		0 & 0 & 0 
	\end{array}\right) $ and consider a linear control system $\Sigma=(\mathcal{G},\mathfrak{D})$ with 
	$$ \mathfrak{D}= \left\{ X+ u_2Y^2+\nu_1 \Xi^1 \right\} $$
	Here $\mathfrak{h}=\textnormal{Span}_{L.S.A.}\{Y^2,~\Xi^1 \}=\textnormal{Span}\{Y^2,~\Xi^1 \}$ and we have,
	\begin{align*}
		\textnormal{ad}(X)(Y^2)&=-2Y^1,
		\textnormal{  ad}(X)(\Xi^1)=-\Xi^2\\
		~\textnormal{ad}^2(X)(Y^2)&=Y^3-Y^1,~\textnormal{  ad}^2(X)(\Xi^1)=\Xi^1.
	\end{align*}

	Now we get $\langle X \vert \mathfrak{h}\rangle=\mathfrak{osp}(2 \vert 1)$. Here, both the Lie superalgebraic span and the linear span cover the whole space, resulting the locally controllablility by super ad-rank condition.
	
\end{example}

\section{Conclusion}\label{sec5}
	
	In this work, a generalized version of dynamical system is given using the concept of normalizer. This proposed system generalizes different type of dynamical systems such as linear control system on both superspace and supergroup,  bilinear system on superspace and invariant system on Lie supergroup. After the characterization of normalizer, the controllability property of linear control system on Lie supergroup is studied in detail. Here, two different rank conditions are proposed, the Lie superalgebra rank condition is a necessary whereas as the ad-rank condition is a sufficient condition for the controllability of linear control system on Lie supergroup. Here, it is also shown that the proposed rank condition reduces to the extended Kalman rank condition \cite{sahoo2024controllability} when the Lie supergroup is taken to be simply the superspace.

	
	\bibliographystyle{unsrt}   
	
\end{document}